\documentclass[12pt]{amsart}

\usepackage{graphicx}
\usepackage{amscd,amsxtra}
\usepackage{amsfonts}
\usepackage{amsthm}
\usepackage{amssymb,latexsym,amsmath}

\theoremstyle{plain}
\newtheorem{thm}{Theorem}[section]
\newtheorem{theorem}[thm]{Theorem}

\newtheorem{lemma}[thm]{Lemma}

\newtheorem{proposition}[thm]{Proposition}
\theoremstyle{definition}
\newtheorem{remark}[thm]{Remark}

\newtheorem{definition}[thm]{Definition}
\newtheorem{claim}[thm]{Claim}

\newtheorem{question}[thm]{Question}

\numberwithin{equation}{section}

\newcommand{\sO}{{\mathcal O}}

\newcommand{\sV}{{\mathcal V}}

\newcommand{\A}{{\mathbb A}}

\newcommand{\C}{{\mathbb C}}

\newcommand{\BP}{{\mathbb P}}
\newcommand{\Q}{{\mathbb Q}}
\newcommand{\R}{{\mathbb R}}

\newcommand{\Z}{{\mathbb Z}}

\newcommand{\Aut}{{\rm Aut}\,}
\newcommand{\Bir}{{\rm Bir}\,}

\usepackage[all]{xy} 
\usepackage{color}

\title [Primitive automorphisms]{On algebraic dynamics of birational automorphisms of Calabi-Yau manifolds and the universal Coxeter groups}

\author{Keiji Oguiso}

\address{Mathematical Sciences, the University of Tokyo, Meguro Komaba 3-8-1, Tokyo, Japan and National Center for Theoretical Sciences, 
Mathematics Division, National Taiwan University, 
Taipei, Taiwan}
\email{oguiso@ms.u-tokyo.ac.jp}

\thanks{The author is supported by JSPS Grant-in-Aid (A) 25H00587, 25K21992 and NCTS scholar program.}

\begin{document}

\maketitle

\begin{abstract} This is a continuation of our study of algebraic dymanics of birational automorphisms of complex Calabi-Yau manifolds in the view of primitivity and the existence of Zariski dense orbit. The universal Coxeter groups and their Coxeter elements also play important roles in our study. 
\end{abstract}

\section{Introduction}\label{sect1}

Unless stated otherwise, we work in the category of projective varieties defined over an {\it uncountable} algebraically closed field $k$ with a fixed embedding $k \subset \C$. Our main results are Theorems \ref{mainthm1}, \ref{mainthm2}, which are generalizations of our previous works \cite{Og18} and \cite{Og19}, with corrections of some arguments there, and Theorem \ref{thma52} on the universal Coxeter group.

\medskip

Let $V$ be a projective variety of dimension $d \ge 2$ and $f : V \dasharrow V$ be a birational self-map, i.e., $f \in \Bir (V)$. Here and hereafter we denote by $f^{n}$ the $n$th iteration of $f$. Algebraic dynamics studies behaviours of $f^{n}$ under $n \to \infty$. One of the most natural and basic questions is the existence of Zariski dense orbit:

\begin{question}\label{ques1} When does $f$ have a Zariski dense orbit, i.e., when is there $x \in V(k)$ such that the orbit $\{f^n(x)\, |\, n \in \Z_{\ge 0}\}$ of $x$ under $f$ is Zariski dense in $V$? (See Section \ref{sect2} for a more precise formulation.)
\end{question}

By Amerik and Campana \cite[Th\'eor\`eme 4.1]{AC13} (see also \cite[Theorem 1.2]{BGR17} and Theorem \ref{thm21} in Section \ref{sect2}) and by the {\it uncountability assumption} of $k$, we have:

\begin{theorem}\label{thm1}
Let $f \in \Bir (V)$ with ${\rm ord}\, f = \infty$. Then $f$ has a Zariski dense orbit if and only if there is no commutative diagram
$$
  \xymatrix{
    & V \ar@{..>>}[d]^{\varphi} \ar@{..>>}[r]^{f} & V \ar@{..>>}[d]^{\varphi}\\
    & W \ar[r]^{{\rm Id}_W} & W. & 
  }$$
for any projective variety $W$ with $0 < \dim W < \dim V$ (under the uncountability of $k$). Moreover, in this case, the set of $x \in V(k)$ with Zariski dense orbit $\{f^n(x)\}_{n \ge 0}$ is also Zariski dense in $V$.
\end{theorem}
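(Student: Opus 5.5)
The plan is to derive the statement from the Amerik--Campana theorem \cite[Th\'eor\`eme 4.1]{AC13} (in the form recorded as Theorem \ref{thm21} in Section \ref{sect2}), together with a Baire-category argument that uses the uncountability of $k$. For a dominant rational self-map $f$ of a variety over an uncountable algebraically closed field, that theorem gives a dichotomy. Either there is a non-constant $f$-invariant rational function $\psi \in k(V)$ with $f^*\psi = \psi$, or the set of points whose $f$-orbit is well defined and Zariski dense is the complement of a countable union of proper Zariski closed subsets. The "moreover" part, namely that this set is Zariski dense, then follows at once: over an uncountable field a variety is not a countable union of proper closed subsets, and the same holds for every nonempty open subset.

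The proof of the equivalence runs as follows.

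\emph{Direction ($\Rightarrow$).} Suppose a diagram as in the statement exists, with $0<\dim W<\dim V$ and $\varphi$ dominant. Take a point $x$ whose forward orbit is defined and lies in the domain of $\varphi$. Then every point $f^n(x)$ lies in the closure of the single fibre $\varphi^{-1}(\varphi(x))$, which is a proper closed subset because $\dim W>0$. Hence no orbit is Zariski dense.

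\emph{Direction ($\Leftarrow$), by contraposition.} Suppose $f$ has no Zariski dense orbit. By the dichotomy there is a non-constant $\psi$ with $f^*\psi=\psi$. Let $K=k(V)^f \subset k(V)$ be the subfield of $f$-invariant rational functions. It is a finitely generated extension of $k$, being a subfield of a finitely generated field, and it has transcendence degree $r\ge 1$. Choose a projective model $W$ of $K$ and let $\varphi\colon V\dashrightarrow W$ be the induced dominant rational map. By construction $\varphi\circ f=\varphi$, which is exactly the required diagram with ${\rm Id}_W$. Then $\dim W = r \ge 1$.

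\emph{The main point: showing $\dim W<\dim V$.} This is where ${\rm ord}\, f=\infty$ is used. If $r=\dim V$, then $k(V)$ is algebraic over $K$, and it is finitely generated, so $[k(V):K]<\infty$. Now $f^*$ is a $K$-linear field automorphism of $k(V)$, and it lies in ${\rm Aut}_K(k(V))$, which is a finite group. Therefore some power $(f^*)^m$ is the identity, and since $f$ is birational this gives $f^m={\rm Id}$, contradicting infinite order. Hence $0<\dim W<\dim V$. Alternatively, if $K$ is not algebraically closed in $k(V)$, one may first replace $K$ by its relative algebraic closure, which is still $f$-stable although possibly not fixed pointwise, and then pass to a power of $f$. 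It is cleaner, however, to use $K$ itself as above, since the diagram only requires invariance under $f$.

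The hard input is the Amerik--Campana dichotomy itself, which I take as given through Theorem \ref{thm21}. Beyond that, the only delicate bookkeeping is making sense of "orbit" for a birational map: one must restrict to points whose forward orbit avoids the indeterminacy loci of all the iterates $f^n$. Over uncountable $k$ the set of such points is again the complement of a countable union of proper closed subsets, so it is compatible with the density argument above.
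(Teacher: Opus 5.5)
Your proof of ($\Leftarrow$) and of the ``moreover'' clause is correct and follows the paper's route. The paper treats the theorem as a direct consequence of the Amerik--Campana fibration (Theorem \ref{thm21}) together with the uncountability of $k$. Taking $W$ to be a model of $k(V)^f$ and using finiteness of ${\rm Aut}_K(k(V))$ to rule out $\dim W=\dim V$ is a clean way to show where ${\rm ord}\, f=\infty$ is needed, a step the paper leaves implicit.

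The gap is in ($\Rightarrow$), which the paper does not spell out either. You only treat points $x\in V_f$ whose entire forward orbit lies in the domain of $\varphi$, and then conclude that \emph{no} orbit is Zariski dense. That does not follow. A Zariski dense orbit has no reason to avoid the proper closed set $I(\varphi)$, and it may even meet it infinitely often. At such points the equality $\varphi\circ f=\varphi$ of rational maps gives no pointwise information, so nothing in your argument stops the orbit from passing from one fibre of $\varphi$ to another. Theorem \ref{thm21} cannot close this gap, because it concerns only very general points, whereas $Z_f=\emptyset$ is a statement about every point of $V_f$.

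The missing step is to work on the closure of the graph. Let $\Gamma\subset V\times W$ be the closure of the graph of $\varphi$, with projections $p\colon\Gamma\to V$ (proper and surjective) and $q\colon\Gamma\to W$. The map $F:=f\times{\rm Id}_W$ is a morphism on $(V\setminus I(f))\times W$. It sends the points $(z,\varphi(z))$ into $\Gamma$, for $z$ in a dense open set on which $\varphi\circ f=\varphi$ holds pointwise. These points are dense in the irreducible open subset $\Gamma\cap((V\setminus I(f))\times W)$, so $F$ maps this whole subset into $\Gamma$. Now take $x\in V_f$ and any $w\in W$ with $(x,w)\in\Gamma$; such a $w$ exists because $p$ is surjective. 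Induction gives $(f^n(x),w)\in\Gamma$ for all $n\ge 0$, so the orbit of $x$ lies in $p(q^{-1}(w))$. This is a proper closed subset of $V$, because $q$ is dominant onto $W$ with $\dim W>0$, which forces $\dim q^{-1}(w)<\dim V$. Your argument is the special case $w=\varphi(x)$ for orbits avoiding $I(\varphi)$.
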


In particular, under the setting of Theorem \ref{thm1}, the existence of a Zariski dense orbit is invariant under a birational modification, that is, if $\nu : V' \dasharrow V$ is a birational map, then $f \in \Bir (V)$ has a Zariski dense orbit if and only if $f' := \nu^{-1} \circ f \circ \nu$ has a Zariski dense orbit.

\begin{remark}\label{rem0} Theorem \ref{thm1} is widely open if $k$ is countable, e.g. if $k = \overline{\Q}$. See Matsuzawa-Wang \cite{MW24}, Matsuzawa-Xie \cite{MX25}, especially Theorem 1.6 there, for important progress.
\end{remark} 

The notion of primitivity of $f \in \Bir (V)$ is introduced by De-Qi Zhang in his study of algebraic dynamics on higher dimensional projective varieties \cite{Zh09} (See also \cite{Og14}):

\begin{definition}\label{Primitive} $f \in \Bir (V)$ is said to be {\it primitive} if there is no commutaive diagram 
$$
  \xymatrix{
    & V \ar@{..>>}[d]^{\varphi} \ar@{..>>}[r]^{f} & V \ar@{..>>}[d]^{\varphi}\\
    & W \ar@{..>>}[r]^{f_W} & W. & 
  }$$
for any projective variety $W$ with $0 < \dim W < \dim V$. In particular, ${\rm ord}\, f = \infty$ if $f$ is primitive.
\end{definition}
Primitivity of $f$ is also invariant under a birational modification in the sense above. By Theorem \ref{thm1}, primitivity of $f$ implies that $f$ has a Zariski dense orbit under the assumption that $k$ is uncountable. We also remark that primitivity of is a kind of {\it irreducibility} in the category of biratioal equivalence classes of projective varieties. 

\medskip

Main objects of this paper is the existence of a Zariski dense orbit and primitivity of a birational automorphism of a Calabi-Yau manifold and a minimal Calabi-Yau variety. See Theorems \ref{thm23}, \ref{thm24} for the reason why Calabi-Yau manifolds and minimal Calabi-Yau varieties are natural and important as target varieties in this study.

\begin{definition}\label{def1} Let $V$ be a projective variety 
of dimension $d$. 
\begin{enumerate}
\item $V$ is said to be a {\it strict Calabi-Yau manifold}, if $V$ is smooth, $H^0(V, \Omega_V^j) = 0$ for $0 < j < d$, $H^0(\Omega_V^{d}) = k \omega_V$ with a nowhere vanishing regular $d$-form $\omega_V$ and the underlying complex manifold $V_{\C}^{{\rm an}}$ is simply-connected.

\item $V$ is said to be a {\it minimal Calabi-Yau variety} if $V$ has at most $\Q$-factorial terminal singularities, $H^1(V, \sO_V) = 0$ and the numerical equivalence class of canonical divisor
$K_V$ is $0$, i.e., $K_V \equiv 0$ in $N^1(V)_{\Q}$. We note that the last condition is equivalent to a stronger condition that $\sO_V(mK_V) \simeq \sO_V$ for some positive integer $m$ (\cite[Theorem1]{Ka13}, see also \cite[Theorem 1.2]{Go13}). 
\end{enumerate}
\end{definition}

Our first main result is the following:

\begin{theorem}\label{mainthm1} Let $V$ be a minimal Calabi-Yau variety of dimension  $d \ge 3$ and $f \in {\rm Bir}\, (V)$. Then $f$ is primitive and $f$ has a Zariski dense orbit, provided that the following two conditions are satisfied:

\begin{enumerate}

\item For any movable effective divisor $D$, there are a minimal Calabi-Yau variety $V'$ and a birational map $\nu : V' \dasharrow V$ such that $D' = \nu^*D$ is semi-ample on $V'$; and 

\item The characteristic polynomial $\Phi_{f}(x)$ of $f^*|_{N^1(V)_{\Q}}$ is either irreducible over $\Q$ or $\Phi_{f}(x) = \varphi(x)(x+a)$ where $\varphi(x) \in \Q[x]$ is irreducible of degree $\ge 2$ and $a \in \Q_{>0}$.
\end{enumerate}
\end{theorem}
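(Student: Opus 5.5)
Since primitivity implies a Zariski dense orbit by Theorem \ref{thm1} ($k$ is uncountable), it suffices to prove that $f$ is primitive. Suppose, for a contradiction, that there is an equivariant dominant rational map $\varphi : V \dasharrow W$ with $f_W \in \Bir(W)$ and $0 < \dim W < d$. Take an ample divisor $H$ on $W$ and replace $\varphi$ by the map it defines, so that $D := \varphi^*H$ (pulled back via a resolution of $\varphi$ and pushed to $V$) is effective and movable, with class $[D] \ne 0$ in $N^1(V)_{\Q}$. By condition (1) there are a minimal Calabi-Yau $V'$ and a birational $\nu: V' \dasharrow V$ with $\nu^*D$ semi-ample. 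Hence the Iitaka fibration of $\nu^*D$ is a morphism $V' \to Z$ that factors $\varphi$ up to birational modification. In particular $0 < \kappa(D) = \dim Z \le \dim W < d$, so $D$ is \emph{not big}. Moreover $D$ is nef on $V'$. Because $V$ and $V'$ are isomorphic in codimension one (both minimal with $K \equiv 0$), we have $N^1(V) = N^1(V')$, so $\nu^*D$ defines a nonzero class $v$ in the closure of the movable cone of $V$.

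Next I would use the invariance under $f$. Since $f_W$ is birational, $f^*$ maps the subspace $L := \varphi^* N^1(W)_{\Q} \subset N^1(V)_{\Q}$ to itself. This works because $f$ is an isomorphism in codimension one on a minimal model, so $f^*$ is a well-defined linear map on $N^1$. The subspace $L$ is nonzero, since it contains $[D]$. Every class in $L$ is a pullback from the lower-dimensional $W$, so the top self-intersection vanishes: $x^d = 0$ for all $x \in L$. The argument then splits according to condition (2). If $\Phi_f$ is irreducible, then the only nonzero $f^*$-invariant $\Q$-subspace is the whole space, so $L = N^1(V)_{\Q}$. But $N^1(V)$ contains an ample class $A$ with $A^d > 0$, which contradicts $x^d = 0$ on $L$. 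If $\Phi_f = \varphi(x)(x+a)$, then the invariant subspaces are $0$, the line $E$ (the eigenline for eigenvalue $-a$), the hyperplane $\ker \varphi(f^*)$, and the whole space. The whole space is excluded as before.

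I expect the remaining two cases to be the main obstacle. For $L = E$, $f^*$ acts on $[D]$ by $-a < 0$. But $f^*$ preserves the movable effective cone, and hence the pseudo-effective cone, which is salient. So $-a[D]$ cannot be pseudo-effective unless $[D] = 0$, a contradiction. For $L = \ker\varphi(f^*)$, I would use the fact that $\varphi$ is irreducible of degree $\ge 2$ together with a Perron--Frobenius argument on the $f^*$-invariant cone $\overline{\rm Mov}(V)$. Such an invariant cone has an eigenvector inside it whose eigenvalue is the spectral radius $\rho$, and $\rho$ is real and positive. Since the eigenvalue $-a$ is negative, $\rho$ must be a root of $\varphi$, and so there is a nef-in-codimension-one class $v_+$ lying in $L \otimes \R$. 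I would then argue that $L \otimes \R$ meets the interior of the movable cone, or at least contains big classes. Such a class would have to be big, contradicting $x^d = 0$ on $L$. To get there, I would note that $L$ is a hyperplane and $\varphi$ is irreducible with its Galois conjugates all in $L$. Consequently $L \cap \overline{\rm Mov}$ spans $L$, and the pullback of an ample class of $W$ already gives a non-big, nonzero semi-ample class on $V'$. Combining this with the Hodge index type inequality for the quadratic form $x \mapsto x^{d-2}A \cdot x$ should yield the contradiction. This last step is where the most care is needed, and it is plausibly where the paper's corrections to \cite{Og18}, \cite{Og19} lie.
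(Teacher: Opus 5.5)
\textbf{There is a genuine gap.}

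\textbf{First gap: $D$ need not fail to be big.} Your argument breaks at the claim $0<\kappa(D)=\dim Z\le\dim W$, from which you conclude that $D$ is not big and that $x^d=0$ on $L$. Since $\varphi$ is defined by a sublinear system of $|D|$, the Iitaka fibration of $D$ \emph{dominates} $\varphi$. So in fact $\kappa(V,D)\ge\dim W$, and the reverse inequality is false in general. If $\mu:\tilde V\to V$ resolves $\varphi$, then $\mu^*D=\tilde\varphi^*H+\sum a_iE_i$, and the exceptional part over the indeterminacy locus can make $D$ big. For example, a general pencil in a very ample system $|A|$ on $V$ gives $\varphi:V\dasharrow\BP^1$ with $D=A$ ample. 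Your inequality never used equivariance, so this already refutes it. For the same reason, classes pulled back by a rational map need not satisfy $x^d=0$, and condition (1) alone does not turn $\varphi$ into a morphism on a flop of $V$.

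The paper needs three further ingredients at exactly this point, and all are absent from your plan:
\begin{enumerate}
\item It first proves $Z_f\neq\emptyset$. Otherwise Theorem~\ref{thm1} gives an $f$-invariant map to $\BP^1$, which forces the eigenvalue $1$ on $N^1(V)_{\Q}$, excluded by (2).
\item Bianco's Theorem~\ref{thm31} then excludes equivariant fibrations whose general fibers are of general type.
\item Passing to the relative canonical model yields an $f$-equivariant fibration $\pi:V\dasharrow B$ with $0<\dim B<d$ and $\kappa(\tilde V_b)=0$.
\end{enumerate}
Only under $\kappa(\tilde V_b)=0$ does Claim~\ref{cl31} give $\kappa(V,D)=\dim B$ for $D=\mu_*\tilde\pi^*H$. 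Terminality gives $nK_{\tilde V}=\sum b_iE_i$ with all $b_i>0$, so a multiple of $K_{\tilde V_b}$ dominates $\mu^*D|_{\tilde V_b}$. Hence, after the flop making $D$ semi-ample, $\pi$ becomes the Iitaka morphism.

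\textbf{Second gap: invariance of $L$.} Your claim $f^*L=L$ is not justified. The problem is not $f$ but $f_W$, which is only birational. Indeed $f^*\varphi^*H=(f_W\circ\varphi)^*H$ differs from $\varphi^*f_W^*H$ by divisors of $V$ lying over codimension $\ge 2$ loci of $W$. So $\varphi^*N^1(W)_{\Q}$ need not be $f^*$-stable, even when $\varphi$ is a morphism.

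Following Bianco, the paper uses instead the span $M$ of the classes $(\psi\circ\pi)^*N^1(B)_{\Q}$ over all $\psi\in\Bir(B)$, which is $f^*$-stable by construction. In place of your $x^d=0$, the property used is that every class of $M$ has degree $0$ on curves $C$ in a general fiber of the morphism $\pi$. This gives $M\neq N^1(V)_{\Q}$, which disposes of the irreducible case.

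The same property closes the hyperplane case you left open, with no Perron--Frobenius or Hodge index input. Write $N^1(V)_{\Q}=M\oplus\Q l$ with $f^*l=-al$. Then $(l.C)\neq0$, say $(l.C)>0$. An ample $H=m+cl$ then has $c>0$, and $(f^*H.C)=-ac(l.C)<0$ contradicts the movability of $f^*H$.

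Your treatment of the eigenline case is essentially the paper's and is fine once invariance is available.
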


\begin{remark}\label{rem1} 

\begin{enumerate}

\item Theorem \ref{mainthm1} is a generalization of \cite[Theorem 1.2]{Og18} {\it with correction of arguments \cite[Claim 2.3 and after that]{Og18}}. We also remark that all results of \cite{KP24} based on the proof of \cite[Theorem 1.2]{Og18} are valid under our correction. Our new argument closely follows \cite[Section 4]{LB19}, which shows that $f \in {\rm Bir}\, (V)$ is primitive if the first dynamical degree $d_1(f) > 1$ when $V$ is a projective hyperk\"ahler manifold. However, contrary to the case of hyperk\"ahler manifolds, there is a strict Calabi-Yau threefold $V$ with automorphism $f \in \Aut (V)$ with $d_1(f) > 1$ but $f$ is not primitive and $f$ has no Zariski dense orbit (See \cite[Example 1, Theorem 1.2]{Og25}).

\item The condition (1) in Theorem \ref{mainthm1} is unconditional if log MMP and log abundance theorem hold in dimension $d$. Note that, since $D$ in (1) is effective and movable, under log MMP for $(V, \epsilon D)$ ($\epsilon$ is a small positive rational number), all extremal contractions involved are small contractions. Thus, under the assumption of log MMP, the output $(V', \epsilon D')$ is obtained by a finite sequence flops under which $\Q$-factorial terminal condition and numerically triviality of the canonical class are preserved and $\epsilon D'\equiv K_{V'} + \epsilon D'$ becomes nef. Then $mD'$ is Cartier and free for some positive integer $m$ if the log abundance conjecture holds for $V'$. See \cite{KM98} and \cite{Ka24} for basic notions and facts on log MMP.     

\item The condition (2) is also invariant under a birational map $\nu : V \dasharrow V''$ between minimal Calabi-Yau varieties. Since $\nu$ is isomorphic in codimension one as $V$ and $V''$ have only $\Q$-factorial terminal singularities with numerically trivial canonical class (see e.g. \cite[Page 240, in the proof]{Ka08}), $\nu^* : N^1(V'')_{\Q} \simeq N^1(V)_{\Q}$ is a well-defined isomorphism of $\Q$-vector spaces and thus $\Phi_{f}(x) = \Phi_{f''}(x)$ where $f'' := \nu \circ f \circ \nu^{-1}$. Here we also note that $(f'')^* = (\nu^*)^{-1} \circ f^* \circ \nu^*$ as again $V$ and $V''$ are $\Q$-factorial and $f$ and $\nu$ are isomorphic in codimension one.

\end{enumerate}
\end{remark}

We shall show Theorem \ref{mainthm1} in Section 4.

\begin{definition}\label{def2} Let 
$$V := V_{d} := (2, 2, \ldots, 2) \subset (\BP^1)^{d+1}$$ 
be a {\it general} hypersurface of multi-degree $(2, 2, \ldots, 2)$ in 
$(\BP^1)^{d+1}$ with $d \ge 3$. More precisely, the word {\it general} here requires the three Zariski dense open conditions under $d \ge 3$ (\cite[Theorem 1.3, and its proof]{CO15}): $V$ is smooth,  $\Aut (V) = \{{\rm Id}_V\}$ and the $d+1$ projections $p_k : V \to P_k$ ($1 \le k \le d+1$) contract no divisor on $V$. Here $P_k$ is the product $(\BP^1)^{d}$ obtained by removing the $k$-th factor of $(\BP^1)^{d+1}$ and $p_k$ is the natural projection. $V$ is a $d$-dimensional strict Calabi-Yau manifold with $\rho(V) = d+1$. We call $V$ a Calabi-Yau manifold {\it of Wehler type}. 
\end{definition}

Our second main result is the following:

\begin{theorem}\label{mainthm2} 
\begin{enumerate}

\item Every Calabi-Yau manifold of Wehler type $V_d$ ($d \ge 3$) has a primitive birational automorphism, hence, a birational automorphism with Zariski dense orbit.

\item Let $X$ be a minimal Calabi-Yau variety of $\dim X = 3$ and of $\rho(X) = 2$. Then the following three statements are equivalent for $f \in \Bir (X)$:

(i) ${\rm ord}\, f = \infty$ (ii) $f$ is primitive (iii) $Z_f \not= \emptyset$.

Moreover, (2) holds also over $k = \overline{\Q}$. 
\end{enumerate}
\end{theorem}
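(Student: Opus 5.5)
For (1), I will use the birational covering involutions of $V=V_d$. Each projection $p_k : V \to P_k$ is generically finite of degree $2$, so its covering involution $\iota_k \in \Bir(V)$ is well defined. Since $p_k$ contracts no divisor, $\iota_k$ is an isomorphism in codimension one and acts on $N^1(V)_{\Q}$. Let $H_1, \ldots, H_{d+1}$ be the pull-backs of the hyperplane classes of the factors. The relation $H_k+\iota_k^*H_k = p_k^*(\text{a class on }P_k)$, together with the multidegree $(2,\ldots,2)$, gives
$$\iota_k^*H_j = H_j\ (j\neq k), \qquad \iota_k^*H_k = -H_k + 2\sum_{j\neq k}H_j .$$
I then take the Coxeter element $f := \iota_1\circ\iota_2\circ\cdots\circ\iota_{d+1}$ of the universal Coxeter group $\langle \iota_1,\ldots,\iota_{d+1}\rangle$ and apply Theorem \ref{mainthm1} to it.

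Condition (1) of Theorem \ref{mainthm1}: I will use the description of the movable cone of $V_d$ from \cite{CO15}. The effective movable cone is the union of the cones $g^*\overline{\rm Amp}(V)$ for $g$ in the Coxeter group, and $\overline{\rm Amp}(V)$ is spanned by the semi-ample classes $H_j$. So for a movable effective $D$ there is $g$ with $D=g^*D_0$ and $D_0$ nef on $V$, hence semi-ample. Then $\nu = g^{-1}$ (that is, $V'=V$ with the map $g$) makes $\nu^*D$ semi-ample.

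Condition (2) is the heart of (1) and the step I expect to be the main obstacle. I must compute $\Phi_f(x)=\det(xI - f^*)$ for the $(d+1)\times(d+1)$ integral matrix above. I would first write $f^*$ as a product of reflections for the symmetric bilinear form $\langle H_i,H_j\rangle = 1-\delta_{ij}$ (up to scaling), which is the Tits form of the universal Coxeter group. The standard Coxeter-element identity then gives $\Phi_f$ in closed form, and I would show it factors as a Salem-type irreducible factor times possibly a factor $(x+1)$. The irreducibility would be argued either via a Salem/Pisot argument (one real root $>1$, all other roots on the unit circle or conjugate) or via Eisenstein-type reduction modulo $2$. This is exactly what I expect the later Theorem \ref{thma52} to provide. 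The remaining danger is a factor $x-1$, which is excluded by checking $\Phi_f(1)\neq 0$ directly.

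For (2), the implication (iii)$\Rightarrow$(i) is trivial, and (ii)$\Rightarrow$(iii) is Theorem \ref{thm1}. For (i)$\Rightarrow$(ii), suppose ${\rm ord}\,f=\infty$.

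First, $f^*\in GL(N^1(X))\cong GL_2(\Z)$ has infinite order. Indeed, if $(f^m)^*={\rm id}$, then $f^m$ preserves an ample class, so it is an automorphism of a polarized variety. The group of such automorphisms is finite because $H^0(T_X)=0$ (use a resolution and $K_X\equiv 0$), so ${\rm ord}\,f<\infty$, a contradiction.

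Next, replace $f$ by $f^2$, so that $\det f^*=1$. If $|{\rm tr}\,f^*|\le 1$, then $f^*$ has finite order. If $|{\rm tr}\,f^*|=2$, then $f^*$ is unipotent up to sign, say $(x,y)\mapsto(x+ny,y)$ with $n\neq 0$. It preserves the cubic form $c(D)=D^3$, and a binary cubic invariant under such a map is $ay^3$. Then the class $D$ with $y=0$ satisfies $D^3=D^2H=DH^2=0$ for $H$ ample, so $D\equiv 0$ by the Hodge index theorem, a contradiction. Hence $|{\rm tr}\,f^*|\ge 3$, and $\Phi_{f}(x)=x^2-tx+1$ is irreducible over $\Q$. (Passing to $f^2$ is harmless, since primitivity of $f^2$ implies primitivity of $f$.)

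Condition (1) of Theorem \ref{mainthm1} holds since log MMP and log abundance are known in dimension $3$ (Remark \ref{rem1}(2)). Theorem \ref{mainthm1} then gives primitivity. Over $\overline{\Q}$, the primitivity argument is purely algebraic and carries over. The implication (ii)$\Rightarrow$(iii) cannot use Theorem \ref{thm1} there, and this is the second delicate point. I would try to use $d_1(f)>1$, which follows from $|t|\ge 3$, together with the results of \cite{MX25} on Zariski dense orbits over countable fields, specialised to threefolds.
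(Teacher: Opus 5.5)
Your part (1) is the paper's argument. You take the Coxeter element of $\langle\iota_1,\dots,\iota_{d+1}\rangle\simeq{\rm UC}(d+1)$, get condition (1) of Theorem \ref{mainthm1} from Theorem \ref{thm51}(3), and get condition (2) from Theorem \ref{thma52}(2). Your $f$ is the inverse of the paper's $\iota_{d+1}\cdots\iota_1$ and has the same characteristic polynomial. If you meant to prove the factorization yourself, $x-1$ is not ``the remaining danger''. By Proposition \ref{prop23} you must exclude every cyclotomic factor (roots of unity of order $\ge 3$, and a repeated $x+1$), and that is the real content of Lemma \ref{lem53}. Eisenstein at $2$ does not help: $\Phi_w\equiv(x+1)^{r+1}\pmod 2$, while the relevant constant terms ($2^{r+1}$, resp.\ $(2r+1)2^{r}$ after removing $x+1$) are divisible by $4$.

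The genuine gap is in (2). You exclude the parabolic case $|{\rm tr}\,f^*|=2$ by asserting that $f^*$ preserves the cubic form $D\mapsto D^3$. This holds for $f\in\Aut(X)$ but not for birational automorphisms. Such an $f$ is a composite of flops, and flops change the cubic form: for an Atiyah flop with flopping curve $C$, $(D^+)^3=D^3-(D\cdot C)^3$. In fact, when $\rho(X)=2$ no infinite-order element of $\Bir(X)$ can preserve the cubic form. Your own argument rules out a parabolic $(f^2)^*$. A hyperbolic one, with real eigenvectors $v_1,v_2$ and eigenvalues $\lambda^{\pm1}$, $|\lambda|>1$, would force $T(v_i,v_j,v_k)=0$ for all $i,j,k$, where $T$ is the trilinear intersection form; this contradicts $H^3>0$. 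So the invariance you use fails exactly in the situation of the theorem, and the parabolic case is left unexcluded.

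The repair is the paper's. The map $f^*$ preserves $\overline{{\rm Mov}}(X)$, which for $\rho(X)=2$ is a strictly convex cone whose two boundary rays $\R_{\ge0}v_1,\R_{\ge0}v_2$ span $N^1(X)_{\R}$. Hence $g:=f^2$ fixes both rays, so $g^*$ is diagonalizable with positive eigenvalues $a_1,a_2$ and $a_1a_2=\det g^*=1$. A nontrivial $\pm$unipotent matrix has only one eigenline, so the parabolic case cannot occur, and ${\rm tr}\,g^*=a_1+a_1^{-1}>2$ unless $g^*={\rm id}$. With this substitution your trace argument ($t\in\Z$, $t\ge3$, so $x^2-tx+1$ is irreducible) is a valid and slightly slicker alternative to the paper's integrality argument with $((g^n)^*D\cdot H^2)_V\in\Z$. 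Take as lattice the image of Weil divisor classes in $N^1(X)_{\Q}$, since $f^*$ of a Cartier divisor is only $\Q$-Cartier.

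Lastly, over $\overline{\Q}$ the proof of Theorem \ref{mainthm1} is not purely algebraic: Lemma \ref{lem32} uses Theorem \ref{thm1}, hence uncountability. So argue after base change to $\C$ and use that primitivity descends. Your route to (ii)$\Rightarrow$(iii) via $d_1(f)>1=d_3(f)$ and \cite{MX25} is the paper's.
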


Theorem \ref{mainthm2} is an application of Theorem \ref{mainthm1}. 

\begin{remark}\label{rem2} 
\begin{enumerate}

\item Algebraic dynamics of $f \in {\rm Bir}\, (V)$ of a minimal Calabi-Yau variety $V$ is interesting only when $\rho(V) \ge 2$, as $\Bir (V) = \Aut (V)$ and it is a finite group if $\rho(V) = 1$ (See Section \ref{sect6}). There are strict Calabi-Yau manifolds with $\rho(V) = 2$ and with $f \in \Bir (V)$ of infinite order for any dimension $d \ge 3$. (See e.g. \cite[Theorem 1.4]{Og18}). 
 
\item We will present two proofs of Theorem \ref{mainthm2} (1). Both proofs are based on some remarkable properties of the universal coxeter group (Theorem \ref{thma52}). The first proof is a generalization of \cite[Theorem 8.1]{Og19} with some correction of our previous argument \cite[Page 1395, last 9 lines of the proof of Lemma 8.5]{Og19}. In this proof, $f$ is implicit.  Our second proof uses the Coxeter element of the universal Coxeter group (Section \ref{sect3}) and provides an explicit $f \in \Bir (V)$. This answers a quetion posed by Professor Shigeharu Takayama in his seminar.

\end{enumerate}
\end{remark}

We will show Theorem \ref{mainthm2} (1) in Section \ref{sect5} and Theorem \ref{mainthm2} (2) in Section \ref{sect6}.

\section{Preliminaries on algebraic dynamics}\label{sect2}

In this section, we recall some notions and known results on algebraic dynamics relevant to our main results. 

\subsection{Zariski dense orbit conjecture}\label{sect21}

Let $k$ be an algebraically closed field. In this subsection, we do not assume uncountability of $k$ and also allow $k$ to be of positive characteristic. Let $V$ be a projective variety and $f : V \dasharrow V$ be a dominant rational self-map of $V$ over $k$. Here and hereafter we denote the indeterminacy locus of $f$ by $I(f) \subset V$ and also denote
$$V_f := V_f (k) := \{x \in V(k)\,|\, f^n(x) \not\in I(f)\,\, \forall n \in \Z_{\ge 0}\},$$
$$Z_f := Z_f(k) := \{x \in V_f(k)\,|\, \overline{\{f^n(x)\}_{n \ge 0}} = V\},$$
where $\overline{A}$ is the Zariski closure of $A \subset V$ in $V$. Note that $V_f$ is Zariski dense in $V$ if $k$ is uncountable. The same is true for the countable field $k= \overline{\Q}$ by Amerik \cite[Corollary 9]{Am11}. If $x \in V_f$, then the points $f^n(x)$ are well defined points of $V$ and satisfy $f^{n+m}(x) = f^n(f^m(x))$.

We call a closed point $x \in V(k)$ {\it very general} (resp. {\it general}) if there are countably many {resp. finitely many} proper closed subvarieties $V_i \subset V$ ($i \in I$) such that $x \in V \setminus \cup_{i \in I} V_i$. For instance, every point of $V_f$ is a very general point of $V$. Note that the set of very general points are dense in $V$ if $k$ is uncountable, while it might be empty if $k$ is countable. 

The next theorem due to \cite[Th\'eor\`eme 4.1]{AC13} (see also \cite[Theorem 1.2]{BGR17}) is fundamental to the existence of Zariski dense orbit:

\begin{theorem}\label{thm21} Let $V$ be a projective variety and $f : V \dasharrow V$ be a dominant rational self-map. Then there exist a projective variety $W$ (possibly ${\rm Spec}\, k$) and a dominant rational map $\varphi : V \dasharrow W$ such that $\varphi \circ f = \varphi$ and $\varphi^{-1}(\varphi(x)) = \overline{\{f^n(x)\}_{n \ge 0}}$ for very general points $x \in V(k)$ within $V_f$. 
\end{theorem}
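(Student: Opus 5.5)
We propose to prove Theorem \ref{thm21} by constructing $W$ as the ``quotient'' of $V$ by the family of orbit closures. The orbit closures are controlled by the $f$-invariant rational functions, and a countability argument shows that very general orbits have the expected closures.

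The first step is to set up the invariant field. Let $K := \{g \in k(V) \mid g \circ f = g\}$; this is a subfield of $k(V)$ containing $k$. Since $k(V)$ is finitely generated over $k$, so is $K$. Take a projective model $W$ with $k(W) = K$, and let $\varphi : V \dasharrow W$ be the dominant rational map induced by the inclusion $K \subset k(V)$. The relation $\varphi \circ f = \varphi$ then holds by construction. If $K = k$, then $W = {\rm Spec}\, k$.

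The second step is an easy inclusion. Take $x \in V_f$ outside the indeterminacy locus of $\varphi$, and also outside the loci where $\varphi \circ f = \varphi$ fails to hold pointwise. Then every $f^n(x)$ lies in the fiber $\varphi^{-1}(\varphi(x))$, so $\overline{\{f^n(x)\}} \subset \varphi^{-1}(\varphi(x))$. We must exclude countably many bad closed sets here, namely the preimages under the iterates $f^n$. This is exactly why the statement is made for very general points.

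The third step is the reverse inclusion, and it is the core of the proof. We consider the orbit closures $Z_x := \overline{\{f^n(x)\}}$, which satisfy $f(Z_x) \subset Z_x$. The idea is to spread out the family of orbit closures.
\begin{enumerate}
\item We use a Hilbert scheme, or Chow variety, argument. The closures $Z_x$ for $x$ in $V_f$ are parametrized by countably many irreducible components of the Hilbert scheme. Since $k$ is uncountable and $V_f$ is not a countable union of proper closed subsets, there is a single component $H$ such that, for $x$ in a Zariski dense set, $Z_x$ corresponds to a point of $H$ with dimension $r$.
\item The universal family over the closure of the image gives a subvariety $\Gamma \subset V \times H'$ whose fibers over very general points of $V$ are the $Z_x$.
\item The key claim is that distinct orbit closures of very general points are either equal or their generic points are disjoint. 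This holds because $x' \in Z_x$ very general implies $Z_{x'} \subset Z_x$ with the same dimension and irreducibility, hence $Z_{x'} = Z_x$.
\end{enumerate}
From this claim the map $x \mapsto [Z_x]$ is a rational map $\psi : V \dasharrow H'$, obtained by taking the correspondence $\Gamma$, which is generically single-valued. Its fibers are the $Z_x$, and $\psi \circ f = \psi$. Pulling back rational functions along $\psi$ gives $k(\psi(V)) \subset K$. Hence the fibers of $\varphi$ are contained in the fibers of $\psi$, that is, $\varphi^{-1}(\varphi(x)) \subset Z_x$ for very general $x$.

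We expect the main obstacle to be the third step: rigorously producing $\psi$ as a rational map and proving that the orbit closures of very general points form an ``equidimensional irreducible'' family. The irreducibility of $Z_x$ can fail; for example, $f$ may permute components. To handle this, we would first replace $f$ by an iterate $f^m$ that stabilizes the components, and then take the orbit of one component. Alternatively, we would work with the finitely many components, noting that $K$ detects them only through $f$-invariant functions. If this becomes delicate, we would fall back on citing \cite[Th\'eor\`eme 4.1]{AC13} and \cite[Theorem 1.2]{BGR17}, where exactly this construction is carried out.
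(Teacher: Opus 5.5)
The paper does not prove this statement. It quotes it from \cite[Th\'eor\`eme 4.1]{AC13} and \cite[Theorem 1.2]{BGR17}, so your fallback is exactly the paper's treatment, and your Steps 1--2 are fine. Your self-contained argument, however, has a genuine gap in Step 3, which is where the whole content of the theorem lies. Knowing that $[Z_x]$ lies in one Hilbert component $H$ for $x$ in some Zariski-dense set $S$ gives no algebraic relation between $x$ and $[Z_x]$. The universal family $\mathcal{U}\subset V\times H'$ over the closure $H'$ of $\{[Z_x] : x\in S\}$ only yields a correspondence whose fibre over $x$ is $\{h\in H' : x\in\mathcal{U}_h\}$. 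That fibre is a subset of $H'$, not $Z_x$, and it is in general positive-dimensional. That this correspondence is generically single-valued is exactly what must be proved. Your key claim cannot supply it, because it is circular: when $Z_x\neq V$, no point $x'\in Z_x$ is very general in $V$, so very generality says nothing about $\dim Z_{x'}$. Speaking of very general points of a very general $Z_x$ already presupposes that the $Z_x$ form an algebraic family.

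The missing idea is a field-of-definition argument.
\begin{enumerate}
\item Fix a countable algebraically closed $k_0\subset k$ over which $V$ and $f$ are defined. Take $x$ generic over $k_0$, i.e.\ outside the countably many proper $k_0$-closed subsets. This is a very-general condition, and such $x$ exist because $k$ is uncountable.
\item All $f^n(x)$ are $k_0(x)$-rational, so $Z_x$ is defined over $k_0(x)$. It is the fibre over $x$ of the $k_0$-subvariety $\mathcal{Z}=\overline{\bigcup_{n\ge 0}\Gamma_{f^n}}\subset V\times V$, the closure of the union of the graphs of the iterates. Generic flatness then gives an honest rational map $\psi : V\dasharrow {\rm Hilb}(V)$, $x\mapsto[\mathcal{Z}_x]$, defined over $k_0$.
\item Since $f(x)$ is again $k_0$-generic, some $\sigma\in{\rm Aut}(k/k_0)$ sends $x$ to $f(x)$. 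Hence $Z_{f(x)}=\sigma(Z_x)$ has the same Hilbert polynomial as $Z_x$. Together with $Z_{f(x)}\subset Z_x$ this forces equality, so $\psi\circ f=\psi$, and $\psi$ factors through $\varphi$ exactly as in your last step.
\item The diagonal $\Gamma_{f^0}$ lies in $\mathcal{Z}$, so $x'\in\mathcal{Z}_{x'}$ for every $x'$. Thus, for $x'$ in a dense open part of $\varphi^{-1}(\varphi(x))$, $\varphi(x')=\varphi(x)$ gives $\mathcal{Z}_{x'}=\mathcal{Z}_x$, hence $x'\in Z_x$.
\end{enumerate}
This yields $\varphi^{-1}(\varphi(x))\subset Z_x$ with no irreducibility or equidimensionality input.

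Do not pass to an iterate $f^m$: that enlarges the invariant field. For example, take $f(s,t)=(-s,2t)$ on $\BP^1\times\BP^1$ over $\C$. Then $K=\C(s^2)$, and the fibre $\{\pm s_0\}\times\BP^1$ through $(s_0,t_0)$ is reducible, whereas $f^2$ has invariant field $\C(s)$.
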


\begin{remark}\label{rem22} 
\begin{enumerate}
\item Theorem \ref{thm1} in Introduction is a direct consequence of Theorem \ref{thm21} when $k$ is uncountable. Thus primitivity of $f$ implies $Z_f \not= \empty$ under the same assumption on $k$.

\item Primitivity is stronger than the existence of Zariski dense orbit even over $\C$, as the following simple example shows.  Let $E$, $F$ be two elliptic curves defined over $\C$, which are not isogenous. Let $a \in E$ and $b \in F$ be non-torsion points. Then the translations $t_{(a, b)}$ and $t_b$ form a commutative diagram 

$
  \xymatrix{
    & E \times F \ar[d]^{{\rm pr}_2} \ar[r]^{t_{(a, b)}} & E \times F \ar[d]^{{\rm pr}_2}\\
    & F \ar[r]^{t_b} & F & 
  }
$ 

and thus $t_{(a, b)}$ is not primitive. On the other hand, since $E$ and $F$ are not isogenous, 
$$\overline{\{t_{(a, b)}^n(x, y)\}_{n \ge 0}} = (x, y) + \overline{\{n(a, b)\}_{n \ge 0}} = E \times F,$$
that is, the orbit of any $(x, y) \in E \times F$ is Zariski dense in $E \times F$. 

\end{enumerate}
\end{remark}

Let $V$ be a smooth projective variety. We denote by $\kappa (V) := \kappa (V, K_V)$ the Kodaira dimension and by $q(V) := \dim H^1(V, \sO_V^1)$ the irregularity of $V$. Set:
$$\sV_{d} := \{(0,0), \ldots, (0, d-1), (0, d), (-\infty,0), \ldots, (-\infty, d-1) \}.$$ Then by \cite[Theorem 1.7]{CLO25}, we have:

\begin{theorem}\label{thm23} Let $V$ be a smooth projective variety of $d:= \dim (V) \ge 2$ and $f \in \Bir (V)$. Then the following assertions hold even over $\overline{\Q}$:

(1) If $Z_f \not= \emptyset$, then $(\kappa (V), q(V)) \in \sV_{d} \setminus \{(0,d-1)\}$. 

(2) Conversely, for each $(\kappa, q) \in \sV_{d} \setminus \{(0,d-1)\}$, there exist $V$ and $f \in \Aut (V)$ such that 
$(\kappa (V), q(V)) = (\kappa, q)$ and $Z_f \not= \emptyset$. 
\end{theorem}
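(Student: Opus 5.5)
The plan for (1) is to exploit the birational invariance of the Iitaka and Albanese fibrations. Both are canonically attached to $V$, so every $f \in \Bir (V)$ descends along them. I then use the fact that $Z_f \not= \emptyset$ forces the induced map on each base to have a Zariski dense orbit as well. Part (2) I would prove by explicit product and quotient constructions.

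\emph{Step 1: $\kappa(V) \le 0$.} Suppose $\kappa(V) \ge 1$ and let $\varphi : V \dasharrow W$ be the Iitaka fibration, so $\dim W = \kappa(V) > 0$. The map $f$ acts on $H^0(V, mK_V)$ for each $m$. By the finiteness of the pluricanonical representation (Deligne--Nakamura--Ueno), the image of $\Bir (V)$ in ${\rm PGL}(H^0(V, mK_V))$ is finite. Hence $\varphi \circ f^N = \varphi$ for some $N > 0$. The $f$-orbit of $x$ is the union of the $f^N$-orbits of $x, f(x), \ldots, f^{N-1}(x)$, and each of these lies in a single fibre of $\varphi$. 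Its closure therefore lies in finitely many fibres and is not dense, contradicting $Z_f \not= \emptyset$.

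\emph{Step 2: constraints on $q$.} Let $a : V \to {\rm Alb}(V)$ be the Albanese map. The map $f$ induces a birational self-map $g$ of $a(V)$, and $g$ again has a Zariski dense orbit. By Ueno's structure theorem, $a(V)$ is fibred by translates of an abelian subvariety $B$ over a base $a(V)/B$ of general type. The group $\Bir$ of a variety of general type is finite, so the argument of Step 1 forces this base to be a point. Since $a(V)$ generates ${\rm Alb}(V)$, we get $a(V) = {\rm Alb}(V)$ and hence $q \le d$. If $q = d$, then $a$ is generically finite onto its image, so $\kappa(V) \ge 0$. This leaves exactly $\sV_d$.

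\emph{Step 3: excluding $(0, d-1)$; this is the main obstacle.} Here Kawamata's theorem shows that $a$ is surjective with connected fibres, which are elliptic curves. Moreover, $V$ is birationally an elliptic fibre bundle that becomes trivial after a finite étale base change, i.e.\ $V \sim_{\rm bir} (E \times A')/G$, where $G$ acts diagonally and acts on $E$ faithfully but not only by translations, since otherwise $q = d$. I would lift some power $f^N$ to $E \times A'$, where it splits as a map on the $E$-factor times a map on the $A'$-factor, commuting with $G$ up to conjugation. The group $G$ contains a non-translation, and $f^N$ must normalize $G$. I expect this to force the $E$-component of $f^N$ to have finite order modulo the $G$-action. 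That would produce a nonconstant $f^{N'}$-invariant rational function, namely the pull-back of $E \to E/\langle G|_E, \text{that component} \rangle \simeq \BP^1$. As in Step 1, this contradicts $Z_f \not= \emptyset$. The delicate point is making the normalizer argument rigorous, in particular controlling which translations of $E$ commute with the non-translation elements of $G$.

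\emph{Step 4: realizing each pair for (2).} I take products $V = A \times Y$, where $A$ is an abelian variety of dimension $q$ carrying a translation by a point generating $A$, and $Y$ carries a primitive automorphism. I then check that a suitable product map has a Zariski dense orbit via Theorem \ref{thm21}; choosing the factors so that there is no common invariant fibration should suffice.
\begin{itemize}
\item For $(-\infty, q)$ with $q \le d-1$, I take $Y = \BP^{d-q}$ or a rational variety with a primitive automorphism, e.g.\ a blow-up of $\BP^2$ times further factors.
\item For $(0, d)$, I take a translation on an abelian variety of dimension $d$.
\item For $(0, q)$ with $q \le d-2$, I take $Y$ with $\kappa = 0$, $q = 0$, $\dim Y = d - q \ge 2$, carrying an automorphism with a dense orbit. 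Examples are a K3 surface with a positive-entropy automorphism, or a resolution of $E^m/\langle -1 \rangle$ (or of $E_\omega^m/\mu_3$) with the automorphism induced by a hyperbolic matrix in ${\rm SL}_m(\Z)$.
\end{itemize}
The dimension $1$ gap $d - q = 1$ is exactly the excluded case.
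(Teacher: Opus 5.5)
The paper gives no proof of this statement; it quotes it from \cite[Theorem 1.7]{CLO25}, so your argument has to stand on its own. Steps 1 and 2 do stand. Step 3 has a genuine gap, and it is the step that carries the whole exclusion of $(0,d-1)$. There you assert that the lift splits as a product and only \emph{expect} its $E$-part to have finite order; neither is proved. Both follow from one computation.

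First, $f$ is biregular on $X=(E\times A')/G$, since $X$ contains no rational curves. Some power $f^N$ lifts to an automorphism $F$ of $E\times A'$ normalizing $G$, because $\pi_1(X)$ has only finitely many normal subgroups of index $|G|$. Since $F$ preserves the $E$-fibration,
\[
F(e,a')=(\lambda e+\mu(a')+c,\ \beta(a')),
\]
where $\lambda$ is a root of unity and $\mu\in\mathrm{Hom}(A',E)$. Pick $g_0\in G$ with $g_0(e,a')=(\zeta e+s,\ a'+t)$ and $\zeta\neq 1$. The $E$-coordinate of $Fg_0F^{-1}(e,a')$ is
\[
\zeta e+(1-\zeta)\mu(\beta^{-1}(a'))+(1-\zeta)c+\lambda s+\mu(t).
\]
This must equal $\zeta e+s'$, where $s'$ lies in the finite set $S_\zeta$ of translation parts of elements of $G$ with linear part $\zeta$. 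Since $1-\zeta$ is an isogeny of $E$, this forces $\mu=0$, which is your splitting. It also puts $c$ in the finite set $(1-\zeta)^{-1}(S_\zeta-\lambda s)$.

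The same argument applies to every power $F^n$. Its $E$-part is $\lambda^n e+c_n$, and $\lambda^n$ ranges over finitely many roots of unity, so the $c_n$ also lie in a finite set. Hence $F|_E$ has finite order $N'$. The map $E\times A'\to E\to E/G_E\simeq\BP^1$ then descends to a non-constant $f^{NN'}$-invariant rational function on $V$.

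You should also justify that $V$ is birational to $(E\times A')/G$ with $A'\to A$ \'etale. Use the canonical bundle formula. Since $\kappa(V)=0$, the moduli part (a multiple of the pull-back of $\sO(1)$ under the $j$-map) and the discriminant $\Delta\ge 0$ must both vanish, because a non-zero effective divisor on an abelian variety has positive Iitaka dimension. So $j$ is constant, the monodromy cover is unramified in codimension one, and purity of the branch locus makes it \'etale.

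Part (2) is only sketched, and it misses two things. First, Theorem \ref{thm21} yields dense orbits only at very general points, which need not exist over a countable field. So it cannot give the claim ``even over $\overline{\Q}$''. Second, ``no common invariant fibration'' is exactly what has to be proved for a product map, and it can fail: $t_a\times t_a$ on $E\times E$ preserves $(x,y)\mapsto x-y$.

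For $(-\infty,q)$ both issues are easy to settle. Take $t_a\times{\rm diag}(1,\lambda_1,\ldots,\lambda_{d-q})$ on $A\times\BP^{d-q}$, with $a\in A(\overline{\Q})$ satisfying $\overline{\Z a}=A$ and with multiplicatively independent $\lambda_i\in\overline{\Q}^{\times}$. The orbit of $(x,[1:\cdots:1])$ is a coset of the cyclic group generated by $(a,\lambda)$ in $A\times\mathbb{G}_m^{d-q}$. Its Zariski closure is an algebraic subgroup surjecting onto both factors. By Goursat it is everything, since no non-trivial algebraic group is both an abelian variety and a torus.

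For $(0,q)$ you still owe an argument that $t_a\times M$ on $A\times E^{d-q}$ preserves no fibration. Once that is done, Ghioca--Scanlon's theorem for self-maps of abelian varieties gives a $\overline{\Q}$-point with dense orbit. That orbit avoids $A\times E^{d-q}[2]$, so it descends to $A\times\widetilde{E^{d-q}/\pm1}$. Finally, primitivity of a factor is not what you need. In any case, a product automorphism of ``blow-up of $\BP^2$ times further factors'' is not primitive.
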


The following theorem due to D.-Q. Zhang \cite{Zh09} (see also \cite[Subsection  2.1]{Og14}) is fundamental: 

\begin{theorem}\label{thm24} Assume that the base field $k$ is an algebraically closed field of characteristic zero. Then a smooth projective variety $V$ with a primitive $f \in \Bir (V)$ satisfies one of the following:

(1) $\kappa (V) = -\infty$ and $q(V) = 0$;

(2) $\kappa (V) = 0$ and $q(V) = 0$; or

(3) birational to an abelian variety ($\Leftrightarrow \kappa (V) = 0$ and $q(V) = \dim\, V$ by \cite[Corollary 2]{Ka81}). 

Assume further that in dimension $d$ both the minimal model conjecture and the weak abundance conjecture that claims $h^0(mK_X) > 0$ for some $m >0$ if $K_X$ is nef hold. Then $V$ of $\dim V = d$ in (1) and (2) are:

(1)' a rationally connected manifold;

(2)' birational to a minimal Calabi-Yau variety. 

\end{theorem}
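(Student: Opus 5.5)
This is Zhang's theorem, so the plan is to reconstruct the standard argument through equivariant fibrations, using primitivity to rule out every non-trivial canonical fibration.

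\emph{Step 1 (Iitaka fibration).} Assume $\kappa(V) \ge 1$. The Iitaka fibration $\Phi: V \dasharrow W$ with $\dim W = \kappa(V)$ is defined by $|mK_V|$ for $m \gg 0$. Since $f$ is birational, $f^*$ acts on $H^0(V, mK_V)$ linearly, because pluricanonical forms pull back to pluricanonical forms under birational maps. So $f$ descends to $f_W \in \Bir(W)$ with $\Phi \circ f = f_W \circ \Phi$. Primitivity then forces $\dim W \in \{0, d\}$.
\begin{itemize}
\item If $\kappa(V) = d$, then $V$ is of general type and $\Bir(V)$ is finite. This contradicts ${\rm ord}\, f = \infty$, which follows from primitivity.
\item Hence $\kappa(V) \in \{-\infty, 0\}$.
\end{itemize}

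\emph{Step 2 (Albanese map).} The rational map $\alpha: V \dasharrow {\rm Alb}(V)$, which is in fact a morphism for smooth $V$, is equivariant under any birational self-map, by the universal property applied after resolving $f$. So $f$ descends to the image $A := \alpha(V)$, and primitivity gives $\dim A \in \{0, d\}$.
\begin{itemize}
\item If $\dim A = 0$, then $q(V) = 0$, which gives cases (1) and (2).
\item If $\dim A = d$ and $\kappa(V) = -\infty$, this is impossible: a variety generically finite over an abelian variety has $\kappa \ge 0$ by Ueno.
\item If $\dim A = d$ and $\kappa(V) = 0$, Kawamata's theorem \cite[Corollary 2]{Ka81} says $\alpha$ is birational onto an abelian variety, giving case (3).
\end{itemize}

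\emph{Step 3 (the refinements (1)$'$ and (2)$'$).} For $\kappa(V) = -\infty$, I would use the MRC (rationally connected) quotient $V \dasharrow R$, which is canonical and hence $f$-equivariant. If $0 < \dim R < d$, primitivity fails. If $\dim R = d$, then $V$ is not uniruled, so by the minimal model conjecture together with Boucksom--Demailly--P\u{a}un--Peternell, $K_V$ is pseudo-effective. Weak abundance then gives $\kappa(V) \ge 0$, a contradiction. Hence $\dim R = 0$ and $V$ is rationally connected.

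For $\kappa = 0$ and $q = 0$, run the MMP to get a minimal model $X$ with $\Q$-factorial terminal singularities and $K_X$ nef. Since $\kappa(X) = 0$, abundance in this case, using $h^0(mK_X) > 0$ together with nefness and $\kappa = 0$ (Kawamata's result that $K_X \equiv 0$ when $\nu = 0$, or the standard argument), gives $mK_X \sim 0$. Birational invariance gives $H^1(X, \sO_X) = H^1(V, \sO_V) = 0$. So $X$ is a minimal Calabi--Yau variety.

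\emph{Main obstacle.} The delicate step is checking that each fibration is genuinely $f$-equivariant as a \emph{rational} map of varieties when $f$ is only birational. For the Iitaka fibration and the MRC quotient this follows from canonicity and birational invariance. A secondary difficulty in Step 3 is deducing $K_X \equiv 0$ from $\kappa = 0$ and nefness. I would handle this via Kawamata's abundance for numerical dimension zero, after first showing $\nu(K_X) = 0$, which uses $\kappa = 0$ and nonvanishing.
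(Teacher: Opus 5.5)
The paper gives no proof of this statement; it quotes it from Zhang \cite{Zh09} (see also \cite{Og14}). Your Steps 1 and 2 and your treatment of (1)$'$ follow the standard argument, and they are fine. That argument uses equivariance of the Iitaka fibration, the Albanese map and the MRC quotient, together with primitivity and finiteness of $\Bir$ in general type. One small point: in the case $\dim\alpha(V)=d$, $\kappa(V)=0$, you should first get $q(V)=d$ before invoking \cite[Corollary 2]{Ka81}. This follows from Kawamata's theorem in \cite{Ka81} that the Albanese map is surjective with connected fibres when $\kappa=0$, or from Ueno's theorem applied to $\alpha(V)$.

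\textbf{The genuine gap is in (2)$'$.} After running the MMP you have a $\Q$-factorial terminal $X$ with $K_X$ nef and $\kappa(X)=0$, and you must show $K_X\equiv 0$. You say $\nu(K_X)=0$ ``uses $\kappa=0$ and nonvanishing''. But nonvanishing only says that a nef $K_X$ has $\kappa\ge 0$, which you already know, and it gives no upper bound on $\nu(K_X)$. The implication $\kappa(X)=0\Rightarrow\nu(K_X)=0$ for a minimal model is exactly the $\kappa=0$ case of the full abundance conjecture. No argument derives it from nonvanishing in dimension $d$. The Kawamata/Nakayama results you allude to go the other way: numerical triviality ($\nu=0$) implies $K_X\sim_{\Q}0$.

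Concretely, the unique member $D\in|mK_X|$ could a priori be a nonzero nef divisor with $\kappa(D)=0$ but $\nu(D)\ge 1$, and nothing in your argument excludes this. Primitivity does not obviously help either. An $f$-equivariant nef reduction of $K_X$ would rule out $0<n(K_X)<d$, but the case of maximal nef dimension again needs abundance.

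To close the step you need a stronger input, for instance that $X$ can be taken to be a \emph{good} minimal model, i.e.\ $K_X$ semi-ample. Then $|mK_X|$ is free with $\kappa=0$, so $mK_X\sim 0$. Together with $H^1(X,\sO_X)=H^1(V,\sO_V)=0$ (terminal singularities are rational), this makes $X$ a minimal Calabi--Yau variety. With only the hypotheses you used (MMP plus nonvanishing in dimension $d$), your proof of (2)$'$ is incomplete at this point.
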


Theorems \ref{thm23}, \ref{thm24} show that strict Calabi-Yau manifolds and minimal Calabi-Yau varieties are natural and interesting also as target varieties in algebraic dynamics.

\subsection{Dynamical degrees of a dominant sel-fmap}\label{sect23}

In this subsection, we briefly recall the notion of dynamical degrees and their basic properties.  

\begin{definition}\label{def22} Let $V$ be a smooth projective variety of $\dim V = d$. over a field $k$ of characteristic $0$ and $H$ a very ample Cartier divisor on $V$. Then, the  $p$th dynamical degree of a dominant rational sel-fmap $f : V \dasharrow V$ is defined by
$$d_p(f) := \lim_{n \to \infty} ((f^{n})^{*}(H^{p}).H^{d-p})_{V}^{1/n} := \lim_{n \to \infty} (q_n^{*}(H^{p}).p_n^{*}(H^{d-p}))_{V_n}^{1/n} \ge 1,$$
where $p_n : V_n \to V$ is a birational morphism from a smooth projective variety $V_n$ such that $q_n : V_n \to V$ is a morphism and 
$f^n = q_n \circ p_n^{-1}$. 
\end{definition}

Dynamical degrees satisfy the following basic properties (See Dinh-Sibony \cite{DS05} and references therein):

\begin{theorem}\label{thm25}

\begin{enumerate}

\item $d_p(f)$ ($f \in \Bir (V)$) exists and it is independent of the choices involved. Moreover, $d_p(f)$ is a birational invariant in the sense that if $\nu : V \dasharrow V'$ is a birational map, then $d_p(\nu \circ f \circ \nu^{-1}) = d_p(f)$. 
 
\item For a surjective morphism $f : V \to V$ over $\C$, $d_p(f)$ is the spectral radius $r_p(f)$ of $f^{*}|N^p(V)$ and the topological entropy $h_{{\rm top}}(f) \ge 0$ satisfies: 
$$(0 \le) h_{{\rm top}}(f) = {\rm Max}_{p} \log d_p(f) = {\rm Max}_{p} \log r_p(f).$$
\end{enumerate}
\end{theorem}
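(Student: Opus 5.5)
The plan is to deduce everything from one estimate, the quasi-submultiplicativity of the degree sequence
$$a_n := ((f^{n})^{*}(H^{p}).H^{d-p})_V ,$$
which I expect to be the main obstacle. First, each $a_n$ is a positive integer. It is the intersection number on $V_n$ of the nef classes $q_n^*H^p$ and $p_n^*H^{d-p}$ with $H$ very ample. It is nonzero because $q_n$ is dominant, so the general complete intersection of $p$ members of $|H|$ pulls back to a nonempty cycle of dimension $d-p$ that meets $p_n^*H^{d-p}$. Hence $a_n \ge 1$, which will give $d_p(f)\ge 1$ once the limit exists.

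Second, I would show that there is a constant $C>0$, depending only on $V$, $H$ and $p$, such that for every pair of dominant rational self-maps $g,h$
$$\deg_p(g\circ h)\le C\,\deg_p(g)\,\deg_p(h).$$
I would follow Dinh--Sibony. Take a smooth closed positive form representing the class of $H^p$. Its pull-back by $g$ is a positive closed current, which I regularize into a smooth positive form lying in a class dominated by $C\deg_p(g)$ times the class of $H^p$. Pulling this regularized form back by $h$ and intersecting with $H^{d-p}$ then yields the inequality. The key point to check is that $(g\circ h)^*$ and $h^*\circ g^*$ agree on such smooth forms away from the indeterminacy locus. Over an arbitrary $k$ of characteristic $0$, Truong's algebraic version using Chow-type positivity could replace the currents. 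With $b_n:=C a_n$, the sequence $\log b_n$ is subadditive, so Fekete's lemma gives that $\lim b_n^{1/n}$ exists. The constant $C^{1/n}$ tends to $1$, so $d_p(f)$ exists. Independence of the resolution $V_n$ is clear because the intersection number is computed on any resolution of the graph. Independence of $H$ follows from $H'\le cH$ and $H\le c'H'$ in the nef cone, which change $a_n$ only by a bounded factor.

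Birational invariance then comes from the same estimate. Write $(\nu f\nu^{-1})^n=\nu\circ f^n\circ\nu^{-1}$ and apply the submultiplicative inequality, in its two-variety form, twice. This gives $a_n(\nu f\nu^{-1})\le C'a_n(f)$, and the symmetric inequality holds as well. Taking $n$-th roots, the two dynamical degrees coincide.

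For (2), since $f$ is a morphism, $(f^n)^*=(f^*)^n$ on $N^p(V)_\R$. This linear map preserves the closed cone of pseudo-effective classes, which is strictly convex and full dimensional. The class $h^p$ lies in its interior and $h^{d-p}$ is strictly positive on it. A Perron--Frobenius (Birkhoff) argument for cone-preserving maps then gives $\lim\langle (f^*)^n h^p,h^{d-p}\rangle^{1/n}=r_p(f)$. For the entropy, Gromov's volume-of-graph estimate gives $h_{\rm top}\le \max_p\log d_p$, and Yomdin's theorem gives $\log r$ of $f^*$ on $H^*(V,\R)\le h_{\rm top}$. Finally, the spectral radius on $H^{2p}$ is attained on $H^{p,p}$, via mixed Hodge--Riemann inequalities or Dinh--Sibony. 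There it is detected by the Kähler class $H^p$, hence equals $r_p(f)$ on $N^p$.
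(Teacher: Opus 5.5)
Your outline is correct and follows essentially the route of the literature the paper relies on here; the paper gives no proof of its own, only a citation to Dinh--Sibony and the references therein. That route is quasi-submultiplicativity of degrees via regularization of currents (or Truong's algebraic version) plus Fekete for (1), and for (2) a Perron--Frobenius argument on the pseudo-effective cone combined with Gromov's graph-volume bound and Yomdin's theorem. One simplification: your final Hodge-theoretic step is stronger than needed. The inequality $r_p(f)\le r(f^*|_{H^{2p}(V,\R)})$ already holds because $N^p(V)_{\R}$ is an $f^*$-equivariant quotient of the algebraic part of $H^{2p}(V,\R)$, and together with Yomdin, Gromov and $d_p(f)=r_p(f)$ this closes the chain $\max_p\log r_p(f)\le h_{\rm top}(f)\le\max_p\log d_p(f)$.
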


Salem numbers play important roles in the complex and algebraic dynamics of surface automorphisms and Coxter group theory since the breakthrough works of McMullen \cite{Mc02a}, \cite{Mc02b}. Salem numbers are also important in our study.

\begin{definition}\label{def22} A real algebraic integer $a$ is called a {\it Salem number} if $a > 1$ and its Galois conjugate are $a$, $1/a$ and some complex numbers $e_i$ with $|e_i| = 1$ (possibly empty). We call the minimal monic polynomial $\varphi_a(x) \in \Z[x]$ of a Salem number $a$ the {\it Salem polynomial}  of $a$. We call a monic polynomial $\varphi(x) \in \Z[x]$ a {\it generalized Salem polynomial} if it is the minimal polynomial of $a$ or $-a$ for some Salem number $a$. We note that a generalized Salem polynomial is an irreducible reciprocal polynomial of even degree.
\end{definition}

\begin{proposition}\label{prop23} Let $L = (L, B)$ be an integral hyperbolic lattice of rank $r+1$, that is, $L$ is a free $\Z$-module of rank $r+1$ and $B : L \times L \to \Z$ is a symmetric bilinear form of signature $(1, r)$. Then the characteristic polynomial of $f \in {\rm O}(L, B)$ is the product of at most one generalized Salem polynomial counted with multiplicities and cyclotomic polynomials (possibly $1$). 
\end{proposition}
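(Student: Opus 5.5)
The plan is to combine the integrality of the characteristic polynomial with a spectral argument based on the signature. Write $\Phi_f(x)$ for the characteristic polynomial of $f$ acting on $L$. It is monic with integer coefficients, and $\Phi_f(0) = \pm 1$ because $f$ is invertible over $\Z$. Extend scalars to $L_{\R}$. There $f$ preserves a form $B$ of signature $(1,r)$, so $f$ lies in ${\rm O}(1,r)$.

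The first step is the key spectral fact: $f$ has at most one eigenvalue, counted with multiplicity, of absolute value $>1$. Suppose $\lambda$ is an eigenvalue with $|\lambda| > 1$. For eigenvectors $v, w$ with eigenvalues $\lambda, \mu$ (in $L_{\C}$, using the hermitian extension $B(v,\bar w)$), invariance gives $B(v,\bar w) = \lambda\bar\mu B(v,\bar w)$. Hence the generalized eigenspace $E$ of all eigenvalues of modulus $>1$ is totally isotropic for the complex bilinear extension, and so is the real subspace $E_{\R} = (E + \bar E) \cap L_{\R}$. Since the form has signature $(1,r)$ over $\R$, a totally isotropic real subspace has dimension at most $1$. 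Therefore $E_{\R}$ is a line, and the eigenvalue in question is real and simple; call it $\pm a$ with $a > 1$. The same bound applies to $f^{-1}$, so at most one eigenvalue has modulus $<1$, and it must be $\pm 1/a$, because $\det f = \pm 1$ forces the product of all moduli to be $1$. All other eigenvalues have modulus $1$. I expect this isotropy/dimension argument to be the main technical point, in particular handling Jordan blocks. I would deal with the blocks by working with generalized eigenspaces: the pairing $B(E_\lambda, \overline{E_\mu})$ vanishes unless $\lambda\bar\mu = 1$, which one proves by induction on nilpotency.

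The second step is to factor $\Phi_f$ into $\Q$-irreducible monic factors. These lie in $\Z[x]$ by Gauss's lemma, and each has constant term $\pm 1$. Take any irreducible factor $g$ all of whose roots have modulus $1$. Since $g$ is monic integral with all conjugates on the unit circle, Kronecker's theorem shows its roots are roots of unity, so $g$ is cyclotomic.

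If instead some root of an irreducible factor has modulus $\neq 1$, that factor contains $\pm a$. Hence it is the minimal polynomial $\varphi$ of $\pm a$, it occurs exactly once (by simplicity of the eigenvalue), and it also contains $\pm 1/a$, since the product of its roots is $\pm 1$. Its remaining roots lie on the unit circle. Thus $a$ is a Salem number according to the definition; the case where $a$ is quadratic is allowed, since the unit-circle conjugates may be absent. So $\varphi$ is a generalized Salem polynomial. It appears at most once, and $\Phi_f$ is $\varphi$ times cyclotomic polynomials, or purely a product of cyclotomic polynomials, which is the statement.
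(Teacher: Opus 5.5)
\textbf{The spectral step is sound, but the step to ``$a$ is a Salem number'' is missing a sign argument.} Your first step works: $E$ is stable under conjugation (since $f$ is real), and by your pairing lemma for generalized eigenspaces it is totally isotropic. Signature $(1,r)$ then forces $\dim E\le 1$, and applying the same bound to $f^{-1}$ controls the contracting part.

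The gap is in locating the small eigenvalue. You do this twice: ``it must be $\pm 1/a$, because $\det f=\pm 1$\ldots'' and ``it also contains $\pm 1/a$, since the product of its roots is $\pm 1$.'' Both only determine its \emph{modulus}, not its sign. The paper's definition of a Salem number requires the conjugate $1/a$ itself.

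This matters exactly in the quadratic case you explicitly allow. Take $x^2-x-1$. It is monic and irreducible with constant term $-1$. It has one real root $a=(1+\sqrt5)/2$ outside the unit circle and one root $-1/a$ inside, so it satisfies every property you have derived. Yet it is not a generalized Salem polynomial: the golden ratio's conjugate is $-1/a$, and the polynomial is not reciprocal. The same holds for $x^2+x-1$, whose roots are $-a$ and $1/a$. As written, your argument does not exclude such factors.

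\textbf{The fix: reciprocity of $\Phi_f$.} Let $G$ be the Gram matrix of $B$. Then $f^{t}Gf=G$, so $f^{-1}=G^{-1}f^{t}G$ and $\Phi_{f^{-1}}=\Phi_f$. Hence the spectrum, with multiplicities, is stable under $\lambda\mapsto\lambda^{-1}$. Equivalently, your own lemma ($B(E_\lambda,E_\mu)=0$ unless $\lambda\mu=1$) together with non-degeneracy of $B$ gives a perfect pairing $E_\lambda\times E_{\lambda^{-1}}\to\C$. Either way, the unique eigenvalue of modulus $<1$ is exactly $(\pm a)^{-1}$, with the same sign as the expanding eigenvalue, and your conclusion follows.

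In degree $\ge 4$ you could instead argue that a unit-circle root $e$ satisfies $\bar e=e^{-1}$, which forces the irreducible factor to be reciprocal. That also has to be said explicitly.

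\textbf{Comparison with the paper.} With this repair your proof is a valid, self-contained alternative. The paper splits according to whether $f$ preserves the positive cone $P$ or maps it to $-P$. It cites McMullen for the cone-preserving case and applies that result to $-f$ otherwise. Your spectral argument handles both signs at once and replaces the citation with the isotropy bound plus Kronecker's theorem.
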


\begin{proof} Set $L_{\R} = L \otimes_{\Z} \R$ and $B_{\R} = B \otimes_{\Z} \R$. Then the set $\{x \in L_{\R}|B_{\R}(x, x) > 0\} \subset L_{\R}$ consists of two connected components, say $P$ and $-P$. Then $f(P) = P$ or $f(P) = -P$. In the first case, the characteristic polynomial of $f$ is the product of at most one Salem polynomial counted with multiplicities and cyclotomic polynomials (including $1$) (See eg. \cite[Proof of Proposition 7.1]{Mc02b}). In the second case, we have $(-f)(P) = P$ and the result follows from the first case applied for $-f$.
\end{proof}

\section{Universal Coxeter group and Coxeter element}\label{sect3}

The universal Coxeter group and its Coxeter element play crucial role in our proof of Theorem \ref{mainthm2}. Our main result of this section is Theorem \ref{thma52}. Besides its applicability for Theorem \ref{mainthm2}, Theorem \ref{thma52} has its own interest and will be useful in other study. 

Let $M = (m_{ij})$ be a symmetric $(r+1) \times (r+1)$-matrix with $m_{ij} \in \Z_{\ge 2} \cup \{\infty\}$ and $m_{ii} = 1$. We define the group $G = G(M)$ by
$$G := G(M) := \langle s_1, s_2, \cdots, s_{r+1}\,|\, (s_is_j)^{m_{ij}} = 1\rangle.$$ 
The pair $(G(M), S := \{s_i\}_{i=1}^{r+1})$, denoted simply by $G$, is called the {\it Coxeter group} with the Coxeter system $(S, M)$. We call the element 
$$w := w_{G} := s_1s_2 \cdots s_{r+1} \in G$$
the {\it Coxeter element} of $G$. It is known that $s_{\sigma(1)}s_{\sigma(2)} \cdots s_{\sigma(r+1)}$ ($\sigma \in S_{r+1}$) is conjugate to $w$ if the Coxeter graph associated to $G$ is connected and it contains no circle (See e.g. \cite[5.1, 8.4]{Hum90}). 

We define the symmetric bilinear form $B_G$ on $V_G := \R^{r+1} = \oplus_{j=1}^{r+1} \R e_j$ by 
$B(e_i, e_j) = \cos (\pi/m_{ij})$ for all $1 \le i, j \le r+1$, so that $B(e_i, e_i) = -1$ in our sign convention. We denote by  
${\rm O}(V_G, B_G)$ the real orthogonal group of $V_G$ with respect to the bilinear form $B_G$. Then we can define the group homomorphism $\rho : G \to {\rm O}(V_G, B_G)$ by $s_i \mapsto \rho(s_i)$, where
$$\rho(s_i)(x) :=  x + 2B(x, e_i)e_i\,\, (x \in V_G).$$
Our sign convention here fits well with our purpose.
It is wellknown that $\rho$ is a well-defined group homomorphism and it is injective (See e.g. \cite[Page 113, Corollary]{Hum90}). We call the representation $\rho$ the {\it geometric representation} of the Coxeter group $G$.

Benoist and de la Harpe \cite[Th\'eor\`eme, Page 1358]{BH04} proved the following:

\begin{theorem}\label{thma1}
$G$ be the Coxeter group. Assume that the bilinear form $B_G$ on $V_G$ is non-degenerate and indefinite. Then the image $\rho(G)$ of the geometric representation $\rho : G \to {\rm O}(V_G, B_G)$ is Zariski dense in the real algebraic group ${\rm O}(V_G, B_G)$. 
\end{theorem}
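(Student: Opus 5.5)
We outline the Benoist–de la Harpe argument for Theorem \ref{thma1}, in the present sign convention. Write $\Gamma := \rho(G) \subset {\rm O}(V_G, B_G)$. Let $H$ be the Zariski closure of $\Gamma$ and $H^0$ its identity component, with Lie algebra $\mathfrak h \subset \mathfrak{o}(V_G, B_G) \simeq \wedge^2 V_G$. The goal is to show $\mathfrak h = \mathfrak{o}(V_G,B_G)$ and then that $H$ meets every component of ${\rm O}(V_G,B_G)$.

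\textbf{Step 1: irreducibility and infiniteness.} Non-degeneracy of $B_G$ should make $V_G$ an irreducible $\Gamma$-module. A $\Gamma$-invariant subspace $U$ is stable under each reflection $\rho(s_i)$. Hence for each $i$, either $e_i \in U$ or $U \subset e_i^{\perp}$. If the Coxeter graph is disconnected, $B_G$ splits as an orthogonal sum and one treats the components separately, so assume it is connected (edges where $m_{ij}\neq 2$). Then $U$ either contains all $e_i$, so $U = V_G$, or lies in the radical, which is $0$.

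Indefiniteness rules out $\Gamma$ being finite or virtually abelian. Indeed, a finite Coxeter group has positive definite form (in this sign, negative definite), and affine groups have a degenerate form. Thus $G$ is infinite, non-affine, and contains a free subgroup (Tits alternative / Margulis–Vinberg). Consequently $H^0$ is non-trivial, and $H^0$ is normalized by $\Gamma$.

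\textbf{Step 2: $H^0$ acts irreducibly and is semisimple.} Since $\Gamma$ normalizes $H^0$, the $H^0$-isotypic decomposition of $V_G$ is permuted by $\Gamma$. Using that each reflection fixes a hyperplane, one shows the decomposition cannot be non-trivially permuted. One then rules out $H^0$ preserving a proper subspace by a reflection argument like Step 1, using that $\Gamma \cap H^0$ has finite index. $H^0$ is reductive since it acts irreducibly, and its center acts by scalars preserving $B_G$, hence by $\pm 1$. So $H^0$ is semisimple and irreducible on $V_G \otimes \C$ (or of complex type, which is excluded by the reflections).

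\textbf{Step 3: identify $H^0$.} This is the main obstacle. A reflection $r=\rho(s_i)$ normalizes $H^0$ and acts on $\mathfrak h$. Conjugating, $\mathrm{Ad}(r)$ preserves $\mathfrak h$, and $H\ni r$. The plan is to exploit that $\mathfrak h$ is stable under the family of reflections in $e_i$ and under all $\Gamma$-translates $\gamma e_i$ (the roots). Take $X \in \mathfrak h$ and $\gamma \in \Gamma$. Then $X - \mathrm{Ad}(r_{\gamma e_i})X$ lies in $\mathfrak h$ and has the form $e\wedge v$ with $e=\gamma e_i$, where $v = X e$ up to scaling. So $\mathfrak h$ contains the elements $\gamma e_i \wedge X(\gamma e_i)$. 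By irreducibility the roots span $V_G$, and one concludes that $\mathfrak h \supset e\wedge \mathfrak h e$. Bracketing such elements yields all of $\wedge^2 V_G$, provided $\mathfrak h \ne 0$, which holds by Step 1.

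An alternative at this step is to invoke the classification of irreducible simple subgroups normalized by a reflection: only ${\rm SO}$ itself (Dynkin/Vinberg). I would try the bracket argument first. Thus $H^0 = {\rm SO}^0(V_G,B_G)$.

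\textbf{Step 4: components.} ${\rm O}(V_G,B_G)$ has four components for an indefinite form. The reflections $\rho(s_i)$ have determinant $-1$. Reflections in vectors of negative norm $e_i$ have a definite spinor norm, while products $\rho(s_i)\rho(s_j)$ with $m_{ij}=\infty$, or hyperbolic elements, realize the remaining spinor norm/time-orientation class. Together with $H^0 = {\rm SO}^0$, this shows $H$ meets all components, so $H = {\rm O}(V_G,B_G)$. If some component were missed, one would check it against the orientation of the positive cone, as in the proof of Proposition \ref{prop23}.
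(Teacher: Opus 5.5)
The paper does not prove this statement; it quotes it from Benoist--de la Harpe. Your outline therefore has to stand on its own, and as written it does not.

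\textbf{Two claims are false.}

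The reduction ``if the Coxeter graph is disconnected, treat the components separately'' cannot work. In that case $B_G$ is an orthogonal sum and $\rho(G)$ preserves the splitting, so $\rho(G)$ lies in a proper algebraic subgroup. For example, ${\rm UC}(3)\times\Z/2$ has $B_G$ of signature $(1,3)$, non-degenerate and indefinite, yet $\rho(G)$ preserves $\langle e_1,e_2,e_3\rangle\oplus\R e_4$. So the hypothesis that $G$ is irreducible (connected Coxeter graph) has to be added to the statement. It holds for ${\rm UC}(r+1)$, the only case the paper uses.

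In Step~4, every element of $\rho(G)$ is a product of reflections in vectors of norm $-1$, so its spinor norm is determined by its determinant. In particular $\rho(s_i)\rho(s_j)$ has trivial spinor norm, and $\rho(G)$ meets only two of the four real components; in signature $(1,r)$ it preserves the cone $P$. This is harmless only because Zariski closure does not see those components. ${\rm SO}^0(V_G,B_G)$ is Zariski dense in the connected algebraic group ${\rm SO}(V_G,B_G)$, so $\mathfrak h=\mathfrak{so}(V_G,B_G)$ together with $\det\rho(s_i)=-1$ already gives $H={\rm O}(V_G,B_G)$. Your ``$H^0={\rm SO}^0$'' conflates the Zariski and the analytic identity components.

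\textbf{The genuine gap is Steps 2--3.}

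An $H^0$-invariant subspace is only known to be stable under $\Gamma\cap H^0$, which need not contain any reflection, so the Step~1 argument does not transfer. The case of a single isotypic component $W^{\oplus k}$ with $k\ge 2$ is never treated. And ``bracketing such elements yields all of $\wedge^2V_G$'' is exactly where irreducibility must enter, but you give no mechanism.

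Your identity $X-{\rm Ad}(r_e)X\propto e\wedge Xe$ is the right tool, and it closes the argument directly:
\begin{itemize}
\item Write $(a\wedge b)(x)=B(a,x)b-B(b,x)a$, $U_e=\{u\in e^\perp : e\wedge u\in\mathfrak h\}$ and $W_e=\R e\oplus U_e$.
\item From $[e\wedge Xe,\,e\wedge u]=B(e,e)\,Xe\wedge u$ and $[X,e\wedge u]=Xe\wedge u+e\wedge Xu$, one gets that $W_e$ is $\mathfrak h$-stable and $\wedge^2W_e\subset\mathfrak h$.
\item Take a simple root $e$ with $\mathfrak h e\neq 0$; it exists because $\mathfrak h\neq 0$. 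Then $\dim W_e\ge 2$.
\item By Clifford's theorem ($H$ is irreducible by Step~1 and $H^0$ is normal of finite index), $W_e$ has an $\mathfrak h$-stable complement $C$.
\item For independent $a,b\in W_e$ we have $(a\wedge b)(C)\subset C\cap W_e=0$. Hence $C\perp W_e$, so $C=W_e^\perp$, and $W_e$ is non-degenerate. Thus $\mathfrak h\subset\wedge^2W_e\oplus\wedge^2W_e^\perp$.
\item For a simple root $f=f_1+f_2$ with $f_1\in W_e$, $f_2\in W_e^\perp$, and $X\in\wedge^2W_e$, the mixed component $f_2\wedge Xf_1$ of $f\wedge Xf\in\mathfrak h$ must vanish. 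Choosing $X$ with $Xf_1\neq 0$ shows $f\in W_e$ or $f\in W_e^\perp$.
\item Connectivity of the Coxeter graph then puts every $e_j$ in $W_e$. So $W_e=V_G$ and $\mathfrak h\supset\wedge^2V_G$.
\end{itemize}
This also makes the semisimplicity, complex-type and free-subgroup discussion in Step~2 unnecessary.
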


The {\it universal Coxeter group} $({\rm UC}(r+1), \{s_i\}_{i=1}^{r+1})$ of rank $r+1$ is the Coxter group with $m_{ij} = \infty$ for all $1 \le i \not= j \le r+1$, i.e., 
$${\rm UC}(r+1) = \langle s_1 \rangle * \langle s_2 \rangle * \cdots * \langle s_{r+1} \rangle \simeq *^{r+1} \Z/2.$$
Let $L := \oplus_{j=1}^{r+1} \Z e_j$. Then the bilinear form $B := B_{{\rm UC}(r+1)}$ of ${\rm UC}(r+1)$ satisfies $B(e_i, e_i) = -1$ and $B(e_i, e_j) = 1$ ($i \not= j$). Therefore, the geometric representation ${\rm UC}(r+1)$ is defined over $\Z$, that is, 
$$\rho : {\rm UC}(r+1) \to {\rm O}(L, B),$$
where ${\rm O}(L, B)$ is the {\it integral} linear orthogonal group of $L \simeq \Z^{r+1}$ with respect to the integral bilinear form $B$ on $L$, and we may regard ${\rm UC}(r+1)$ as a subgroup of ${\rm O}(L, B)$. The signature of $B$ is $(1, r)$ if $r \ge 2$ and therefore $(L, B)$ is a hyperbolic lattice of rank $r+1$ for $r \ge 2$. (See e.g. \cite[2.2.2]{CO15} for the proof.)

We will use the following theorem (Theorem \ref{thma52}) in our proof of Theorem \ref{mainthm2}. To my best knowledge, there is even no literature which states Theorem \ref{thma52}. Since the proof is fairly elementary, we shall also give a proof of Theorem \ref{thma52}. 

\begin{theorem} \label{thma52}
Let $r \ge 2$ and regard ${\rm UC}(r+1) \subset {\rm O}(L, B)$ via the geometric representation $\rho$. Let $\Phi_g(x)$ be the characteristic polynomial of $g \in {\rm UC}(r+1)$ regarded as a linear transformation of $L$. Then: 

\begin{enumerate}
\item There are infinitely many $g \in {\rm UC}(r+1)$ such that $\Phi_g(x)$ is a generalized Salem polynomial of degree $r+1$ when $r$ is odd and there are infinitely many $g \in {\rm UC}(r+1)$ such that $\Phi_g(x)$ is the product of a generalized Salem polynomial of degree $r$ and $x+1$ when $r$ is even.

\item The characteristic polynomial $\Phi_{w}(x)$ of the Coxeter element $w$ of ${\rm UC}(r+1)$ is a Salem polynomial of degree $r+1$ if $r$ is odd and the product of a Salem polynomial of degree $r$ and $x+1$.
\end{enumerate}

\end{theorem}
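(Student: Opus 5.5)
The plan is to prove (2) first by computing $\Phi_w(x)$ in closed form, and then to deduce (1) by taking odd powers of $w$.

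\textbf{Closed form for $\Phi_w$.} Put $n := r+1$. Write the Gram matrix as $2B = U + U^{T}$, where $U$ is upper triangular with diagonal entries $-1$ and all entries above the diagonal equal to $2$. The standard argument for Coxeter elements (Howlett, A'Campo), adapted to our sign convention $\rho(s_i)(x) = x + 2B(x, e_i)e_i$, expresses $\rho(w)$ as $-U^{-1}U^{T}$, possibly up to transpose and sign. I would check this identity by induction on $n$. Granting it, $\Phi_w(x)$ equals, up to a unit, $\det(xU + U^{T})$.

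\textbf{Evaluating the determinant.} The matrix $xU + U^{T}$ has diagonal $-(x+1)$, upper entries $2x$ and lower entries $2$. A Toeplitz matrix with diagonal $\delta$, upper entries $a$ and lower entries $b$ has determinant $\bigl(b(\delta - a)^n - a(\delta - b)^n\bigr)/(b-a)$. Substituting gives, up to sign,
$$\Phi_w(x) = \frac{(3x+1)^n - x(x+3)^n}{1-x}.$$
This polynomial is monic of degree $n$ and reciprocal. Write $P(x)$ for the numerator.
\begin{itemize}
\item $P(1) = 0$, which accounts for the division by $1-x$.
\item $\Phi_w(-1) = 0$ exactly when $n$ is odd, i.e.\ when $r$ is even.
\item Comparing the sign of $\Phi_w$ near $x=1$ with its sign as $x \to \infty$ produces a real root $>1$. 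I would also check that $x=1$ is a simple root of $P$, so that $1$ is not a root of $\Phi_w$.
\end{itemize}
Since $w \in {\rm O}(L,B)$ and $(L,B)$ is hyperbolic for $r \ge 2$, Proposition \ref{prop23} says $\Phi_w$ is one (generalized) Salem polynomial, possibly repeated, times cyclotomic polynomials. A root $>1$ forces exactly one Salem factor, occurring once.

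\textbf{Excluding cyclotomic factors (main obstacle).} I must show the only root of unity $\zeta$ with $\Phi_w(\zeta) = 0$ is $\zeta = -1$, and only for odd $n$. Suppose $\zeta$ is a root of unity with $P(\zeta) = 0$. Set $\mu := (3\zeta+1)/(\zeta+3)$. Then $|\mu| = 1$ and $\mu^n = \zeta$, so $\mu$ is itself a root of unity. From $3\zeta + 1 = \mu(\zeta+3)$ we get, reducing modulo $3$ in $\Z[\mu,\zeta]$, that $\mu\zeta \equiv 1 \pmod 3$. For a root of unity $\xi \neq 1$, the norm of $1-\xi$ is either $1$ or a single prime $p$. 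Hence $3 \mid 1-\xi$ is impossible, and so $\mu = \zeta^{-1}$. Then $3\zeta + 1 = \zeta^{-1}(\zeta + 3)$ gives $\zeta = \zeta^{-1}$, so $\zeta = \pm 1$. The case $\zeta = 1$ is already excluded, and $-1$ occurs only for odd $n$ and with multiplicity one (I would check $P'(-1) \neq 0$).

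The remaining factor has degree $r+1$ or $r$ and has a root $>1$. Being irreducible by the above, it is a Salem polynomial, because its largest root is positive. This proves (2).

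\textbf{Proof of (1).} Let $\lambda$ be the Salem number from (2), and take $g = w^k$ with $k$ odd. The roots of $\Phi_g$ are the $k$-th powers of the roots of $\Phi_w$. Since $\lambda^k$ is again a Salem number, and its conjugates $\lambda_i^k$ remain pairwise distinct, $\lambda^k$ has the same degree as $\lambda$. The root $-1$ stays $-1$ because $k$ is odd. So $\Phi_{w^k}$ has the required form for every odd $k$. As $w$ has infinite order, these $w^k$ give infinitely many distinct elements of ${\rm UC}(r+1)$. The conjugates being pairwise distinct is the standard fact that no ratio of two distinct conjugates of a Salem number is a root of unity; I would cite or reprove it.
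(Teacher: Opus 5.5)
Your proposal is correct, and it differs from the paper's route in two places.

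\textbf{Part (2).} Here you follow the paper's skeleton: $\rho(w)=-U^{-1}U^{T}$, the closed form $\Phi_w(x)=\bigl(x(x+3)^{r+1}-(3x+1)^{r+1}\bigr)/(x-1)$, a real root $>1$, and Proposition~\ref{prop23}. The sign in $\rho(w)=-U^{-1}U^{T}$ really matters, since the opposite sign would give $\pm\Phi_w(-x)$; a coordinate check as in the proof of Lemma~\ref{lem51} confirms your choice. Your Toeplitz formula and your test $\Phi_w(1)<0$ are cosmetic variants of the paper's recursion and of its test $f(2)<0$. The genuine difference is how you exclude roots of unity $\zeta\neq\pm1$. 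The paper confines $\arg\tau$ to $(\pi/m,2\pi/m)$ by comparing triangles in the plane. You instead use the identity $1-\mu\zeta=3(\mu-\zeta)$ and the fact that $3\nmid 1-\xi$ for a root of unity $\xi\neq1$. This is purely arithmetic and shorter. The last fact even follows from an archimedean bound: every conjugate of $(1-\xi)/3$ has modulus at most $2/3$, so its norm cannot be a nonzero integer.

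\textbf{Part (1).} Here the routes diverge completely. The paper combines Benoist--de la Harpe's Zariski density theorem (Theorem~\ref{thma1}) with an auxiliary real matrix $M$ avoiding cyclotomic eigenvalues. You instead deduce (1) from (2) using the odd powers $w^k$. Your route is elementary and explicit. It also avoids two points the density argument must handle separately:
\begin{enumerate}
\item passing from one good $g$ to infinitely many;
\item for even $r$, excluding $(x+1)^2\mid\Phi_g$, which the set $Q(r+1)$ as defined does not rule out.
\end{enumerate}
The paper's route, in exchange, shows that the good elements lie in no proper algebraic subset. It needs no explicit computation, so it applies to any subgroup of ${\rm O}(L,B)$ that is Zariski dense in ${\rm O}(L_{\R},B_{\R})$.

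Finally, the fact about conjugates of $\lambda^k$ that you planned to cite is immediate. Let $m_{\lambda^k}$ denote the minimal polynomial of $\lambda^k$. Then $\prod_\sigma\bigl(x-\sigma(\lambda)^k\bigr)=m_{\lambda^k}(x)^{[\Q(\lambda):\Q(\lambda^k)]}$, and $\lambda^k$ is the only root of modulus $>1$. So it occurs exactly once, and the exponent is $1$.
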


\begin{proof} First we show the assertion (1). Our proof of (1) is a modification of the proof of \cite[Theorem 4.1]{EOY16}. 

Set $L_{\R} := L \otimes_{\Z} \R \simeq \R^{1+r}$ and $B_{\R} = B \otimes_{\Z} \R$. Since $B_{\R}$ is of signature $(1, r)$. we may choose a real basis
$$\langle v_1, v_2, v_3, \cdots, v_{r+1} \rangle$$ 
of $L_{\R}$ under which $B_{\R}$ is represented by the matrix 
$$Q = {\rm diag}\,(1, -1, -1, \cdots, -1).$$ 
We identify $\R$-linear maps of $L_{\R}$ and their matrix representations via this real basis. 
We set $k = (r+1)/2$ when $r$ is odd and $k = r/2$ when $r$ is even. 
According to $r$ is odd or even, we consider the $\R$-linear map of $L_{\R}$ given by the matrix 
$$M = P \oplus R_2 \oplus \cdots \oplus R_k\,\, {\rm or}\,\, M = P \oplus R_2 \oplus \cdots \oplus R_k \oplus (-1)$$
where 
$$P= \left(\begin{array}{rr}
\sqrt{2} & 1\\
1 & {\sqrt{2}} \end{array} \right)\,\, ,\,\, 
R_i = \left(\begin{array}{rr}
\cos 2\pi\sqrt{2} & -\sin 2\pi\sqrt{2}\\
\sin 2\pi\sqrt{2} & \cos 2\pi\sqrt{2} 
\end{array} \right)\,\, ,$$
for all $2 \le i \le k$. Then $M \in {\rm O}(L_{\R}, B_{\R})$.
 
\begin{lemma} \label{lema1} The characteristic polynomial of $M$ has no cyclotomic factor nor Salem factor if $r$ is odd and the characteristic polynomial of $M$ has no cyclotomic factor nor Salem factor except $x+1$ if $r$ is even.
\end{lemma}

\begin{proof} By the shape of $M$, the eigenvalues of $M$ is not an algebraic integer, except $-1$ when $r$ is even.
\end{proof} 

\begin{lemma} \label{lem52}
There are only finitely many cyclotomic polynomials of degree $\le r+1$.
\end{lemma}

\begin{proof} (cf. \cite[proof of Lemma 4.4]{EOY16}) Let $\varphi(m) = |(\Z/m\Z)^{\times}| = [\Q(\zeta_m): \Q]$ be the Euler function. Since the positive integers $m$ with $\varphi(m) \le r+1$ is finite by the first equality, the result follows.
\end{proof}

Let 
$$u_1(x) = x+1\,\, ,\,\, u_2(x) := x-1\,\, , \cdots\,\, ,\,\, u_{N}(x)$$  
be the cyclotomic polynomials in $\Q[x]$ of degree $\le r+1$, where $N$ is the cardinarity of the cyclotomic polynomials of degree $\le r+1$. 

Let $P(r+1) \subset \R [x]$ be the set of monic polynomials of degree $r+1$. Then $P(r+1)$ is identified 
with $\A_{\R}^{r+1}(\R)$ of the real affine space $\A_{\R}^{r+1}$.
The map  
$${\rm char} : {\rm O}(L_{\R}, B_{\R})(\R) \to P(r+1)\,\, ;\,\, h \mapsto \Phi_{h}(x) := {\rm det} (xI_{r+1} -h)\,\, $$
is given by a morphism ${\rm O}(L_{\R}, B_{\R}) \to  \A_{\R}^{r+1}$ of affine varieties over $\R$. 

Let $P_i := \{p(x) \in P(r+1)\,\, | \,\,  u_i(x) | p(x) \}$
($1 \le i \le N$). Then $P_i$ are proper closed algebraic subsets of $P(r+1)$. According to $r$ is odd or even, we define
$$Q(r+1) := \cup_{i=1}^{N} P_i\,\, ,\,\, Q(r+1) := \cup_{i=2}^{N} P_i.$$ 
Then $Q(r+1)$ is a closed algebraic subset of $P(r+1)$ and thus ${\rm char}^{-1}(Q(r+1))$ is a closed algebraic subset of ${\rm O}(L_{\R}, B_{\R})(\R)$. Moreover ${\rm char}^{-1}(Q(r+1)) \not= {\rm O}(L_{\R}, B_{\R})(\R)$ as $M \in {\rm O}(L_{\R}, B_{\R})(\R)$ in Lemma \ref{lema1} does not belongs to ${\rm char}^{-1}(Q(r+1))$. Hence ${\rm char}^{-1}(Q(r+1))$ is a proper closed algebraic subset of ${\rm O}(L_{\R}, B_{\R})(\R)$.

Let $g \in {\rm UC}(r+1)$. Then the characteristic polynomial $\Phi_g(x)$ is in $\Z[x]$, monic and of degree $r+1$. Moreover, $g$ preserves $B$. Thus, by Proposition \ref{prop23}, $\Phi_g(x)$ is the product of cyclotomic polynomials and at most one generalized Salem polynomial counted with multiplicities. Hence if the assertion (1) would not hold, then $g \in {\rm char}^{-1}(Q(r+1))$ for all $g \in {\rm UC}(r+1)$. Thus the Zariski closure of ${\rm UC}(r+1)$ in ${\rm O}(L_{\R}, B_{\R})$ is in the proper closed algebraic subset ${\rm char}^{-1}(Q(r+1))$, a contradiction to Theorem \ref{thma1}. This completes the proof of the assertion (1) in Theorem \ref{thma52}.

\medskip

Next we show the assertion (2) in Theorem \ref{thma52}. Let 
$$w = s_1s_2 \cdots s_{r+1} \in {\rm UC}(r+1)$$ be the Coxeter element of ${\rm UC}(r+1)$ and let $\langle e_i \rangle_{i=1}^{r+1}$ be the $\Z$-basis of $L$ in the integral geometric representation of ${\rm UC}(r+1)$.

First we observe the following:

\begin{lemma}\label{lem51} With respect to the $\Z$-basis $\langle e_i \rangle_{i=1}^{r+1}$ of $L$, the matrix representation of the Coxeter element $w \in {\rm UC}(r+1)$ on $L$ is
$$-U \cdot U^{t}.$$ 
Here $U = (u_{kl})$ is the $(r+1) \times (r+1)$ upper triangle matrix such that $u_{kk} = 1$, $u_{kl} = -2$ for $l > k$ and $u_{kl} = 0$ for $l < k$ and $U^{t}$ is the transposed matrix of $U$.
\end{lemma} 

\begin{proof} We can deduce this as a special case of a general result \cite[Theorem 2.9]{La08}. We may also check directly as follows.

Let $x \in L$ and write $x = \sum_i x_ie_i$ and $y = w(x) = \sum_i y_ie_i$. Then, by
 $$s_j(e_j) = -e_j\,\, ,\,\, s_j(e_i) = e_i + 2e_j\,\, (i \not= j),$$
Then the coefficient $z_i$ of $e_i$ in $z = \sum_i z_ie_i$ remains the same for $s_j(z)$ for any $i \not= j$. Hence, applying $s_{r+1}$, $s_r$, $\cdots$, $s_j$ inductively, we obtain that
$$y_j = -x_j + 2\sum_{i<j}x_i + 2\sum_{i>j}y_i\,\, ,\,\, {\rm i.e.}\,\, ,\,  y_j-2\sum_{i>j}y_i=-x_j+2\sum_{i<j}x_i. $$

In the second formula, the left side is the $j$th entry of $Uy$, and the right side is the $j$th entry of $-U^{t}x$. Thus
$$Uy =-U^{t}x \quad\text{for every }x \in L$$
Since, $\det U = 1 \not= 0$, it follows that $w = -U^{-1}U^{t}$ as matrices with respect to the basis $\langle e_i \rangle_{i=1}^{r+1}$. 
\end{proof}

\begin{lemma}\label{lem52} The characteristic polynomial $\Phi_w(x)$ of $w$ is
$$\Phi_w(x) = \frac{x(x+3)^{r+1}-(3x+1)^{r+1}}{x-1}.$$
\end{lemma} 
\begin{proof} This is true for $r=1$. We prove by induction of $r$. We write the $(r+1) \times (r+1)$ matrix $U$ by $U_{r+1}$. By the definition and by $\det U_{r+1} = 1$, we have by Lemma \ref{lem51} that
$$\Phi_w(x) = \det (xI + U_{r+1}^{-1}U_{r+1}^{t}) = \det (xU_{r+1} + U_{r+1}^{t}).$$
Set $D_{r+1} := (d_{kl}) = xU_{r+1}+U_{r+1}^{t}$. Then $d_{kk} = x+1$, $d_{kl} = -2x$ for $k < l$ and $d_{kl} = -2$ for $k > l$. By subtracting the second line from the first line of $D_{r+1}$ and then by expanding with respect to the first line, we obtain
$$\det D_{r+1} = (x+3) \det D_{r} + (3x+1) \det E_{r},$$
where $E_{r}$ is the $r \times r$ matrix obtained by getting rid of the first line and the second colum of $D_{r+1}$. Then, for $E_r$, by subtracting the second line from the first line and then by expanding with respect to the first line, and continuing this process, we readily find that $\det E_{r} = -2(3x+1)^{r-1}$. 
Substituting this and the induction hypothesis, we obtain
$$\Phi_w(x) = \det D_{r+1} = 
(x+3)\frac{x(x+3)^{r}-(3x+1)^{r}}{x-1} - 2(3x+1)^r$$
$$ = \frac{x(x+3)^{r+1}-(3x+1)^{r+1}}{x-1},$$
as claimed. 
\end{proof}

\begin{lemma}\label{lem53} Let $r \ge 2$ and $w$ be the Coxeter element of ${\rm UC} (r+1)$. Then the characteristic polynomial $\Phi_w(x)$ of $w$ is a Salem polynomial of degree $r+1$ when $r$ is odd and $\Phi_w(x)$ is the product of a Salem polynomial of degree $r$ and $x+1$ when $r$ is even. 
\end{lemma} 

\begin{proof} Let $r \ge 2$ and set $f(x) := x(x+3)^{r+1}-(3x+1)^{r+1}$. Note that $f(1) = 0$ and thus $\Phi_w(x) = f(x)/(x-1)$ (Lemma \ref{lem52}) is certainly a monic polynomial in $\Z[x]$. Since 
$$(\frac{7}{5})^{r+1} \ge (\frac{7}{5})^3 = \frac{343}{125} >  2$$
it follows that 
$$f(2) = 2 \cdot 5^{r+1} - 7^{r+1} = 5^{r+1}(2- (\frac{7}{5})^{r+1}) < 0 
\,\, ,\,\, \lim_{x \to +\infty} f(x) = +\infty.$$
Thus $f(x)$, hence $\Phi_w(x)$, has a real root in $(2, \infty)$. Hence by Proposition \ref{prop23}, $\Phi_w(x)$ has exactly one Salem factor. Compute that  
$$f'(1) =  -4^{r}r \not= 0$$
for all $r \ge 2$, 
$$f(-1) = -2^{r+2} \not= 0$$
if $r$ is odd and 
$$f(-1) = 0\,\, ,\,\, f'(-1) = -(2r+1)2^{r+1} \not= 0$$
if $r$ is even. 

By Proposition \ref{prop23}, we are now reduced to show that $f(\zeta) \not= 0$ for any root of unity $\zeta \not= \pm 1$. Assume to the contrary that $f(\zeta) = 0$ for some $m$th root of unity $\zeta$ with $m \ge 3$. Since $f(x) \in \Z[x]$, it follows that $f(e^{2\pi i/m}) = 0$ as well, as $\zeta$ and $e^{2\pi i/m}$ are Galois conjugate over $\Q$. Hence we may and will assume that $\zeta = e^{2\pi i/m}$ and $f(\zeta) = 0$. By the shape of $f(x)$, we have then
$$\zeta(\zeta+3)^{r+1}= (3\zeta+1)^{r+1}\,\, ,{\rm i.e.},\,\, \zeta = (\frac{3\zeta +1}{\zeta +3})^{r+1}.$$
Therefore 
$$\tau := \frac{3\zeta +1}{\zeta +3}$$
is also a root of unity.
Notice that the triangle with vertices $0$, $3\zeta$, $3\zeta +1$ and the triangle with vertices $O := 0$, $A := 3$, $B := \zeta +3$ in the plane $\C$ are congruent and thus
$$0 < \beta := {\rm arg}\, \tau = \frac{2\pi}{m} - 2 \alpha < \frac{2\pi}{m},$$
where $\alpha = \angle AOB$. Let $C := 1$ and $D := \zeta + 1$. Then 
$$\overline{OD} = 2 \cos (\pi/m) < 2 = \overline{BD}$$
and thus 
$$\frac{\pi}{m} - \alpha = \angle DOB > \angle DBO = \alpha\,\, ,\,\, {\rm i.e.}\,\, ,\,\, \frac{\pi}{m} > 2\alpha.$$ 
Therefore
$$\frac{2\pi}{m} > {\rm arg}\, \tau = \beta = \frac{2\pi}{m} - 2 \alpha > \frac{2\pi}{m} - \frac{\pi}{m} = \frac{\pi}{m},$$
and hence
$$\frac{\pi}{m} < {\rm arg}\, \tau < \frac{2\pi}{m}.$$
However, this is impossible, as $\tau$ is a root of unity in $\Q(e^{2\pi i/m})$ so that $\tau^{2m} =1$, hence, ${\rm arg}\, \tau = 2k\pi/2m = k\pi/m$ for some integer $k$.\end{proof}

This completes the proof of Theorem \ref{mainthm2} (2). \end{proof}

\section{Proof of Theorem \ref{mainthm1}}\label{sect4}

In this section, we prove Theorem \ref{mainthm1}. Throughout this section, for a dominant rational map $\pi : X \dasharrow B$ with connected fibers between projective varieties, which we call a {\it rational fibration}, we choose a resolution $\nu : \tilde{X} \to X$ of the singularities of $X$ and $I(\pi)$ so that $\tilde{\pi} = \pi \circ \nu : \tilde{X} \to B$ is a surjective morphism with connected smooth general fibers. We denote by $\tilde{X}_b$ the fiber over $b \in B$ of $\tilde{\pi}$ and set $X_b := \nu(\tilde{X}_b)$. We call a rational fibration $\pi : X \dasharrow B$ is {\it $f$-equivariant} for $f \in \Bir (X)$ if there is $f_B \in \Bir (B)$ such that $\pi \circ f = f_B \circ \pi$. 

The following important result is due to Bianco \cite[Proposition]{LB19} (see also \cite{Og18}): 

\begin{theorem}\label{thm31} Let $X$ be a projective variety and $f \in {\rm Bir}\, (X)$. Assume that $X$ admits an $f$-equivariant 
rational fibration $\pi : X \dasharrow B$ such that $\dim B < \dim X$ and $\tilde{X}_b$ is of general type for general $b \in B$. Then $Z_f = \emptyset$, i.e., $f$ has no Zariski dense orbit. 
\end{theorem}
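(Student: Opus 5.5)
The plan is to prove the contrapositive: assume $x\in V_f$ has a Zariski dense orbit, and show that no $f$-equivariant fibration $\pi : X\dasharrow B$ with $0\le\dim B<\dim X$ and general fibres of general type can exist. (If $\dim B = 0$, then $X$ is birational to a variety of general type, and $\Bir(X)$ is finite, so $Z_f=\emptyset$ at once. Hence assume $\dim B>0$.) The key input is finiteness of birational automorphisms of varieties of general type, applied in a relative, uniform way along the fibration. After replacing $f$ by an iterate $f^N$, the dense-orbit property is unaffected, because the orbit closure of $x$ under $f$ is the union of the orbit closures of $x, f(x),\dots,f^{N-1}(x)$ under $f^N$. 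We may also pass to birational models freely.

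First I would set up a relative Iitaka/canonical model. Since the general $\tilde X_b$ is of general type, the relative canonical ring of $\tilde\pi$ gives a rational map $\Psi : X \dasharrow Y$ over $B$, birational onto its image, where $Y\to B$ is fibrewise the canonical model. The map $f$ maps fibres to fibres birationally, sending $X_b$ to $X_{f_B(b)}$, so it induces birational maps between general fibres of general type. These preserve pluricanonical systems and therefore extend to isomorphisms of canonical models. Thus $f$ induces a fibrewise-biregular map $f_Y$ of $Y$ over $f_B$ on a dense open set.

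Next I would attach to each general $b$ a point of a moduli space. Canonically polarized varieties with fixed Hilbert polynomial have a coarse moduli space $\mathcal M$ of finite type, and $b\mapsto[Y_b]$ gives a rational map $\mu : B\dasharrow \mathcal M$ with $\mu\circ f_B=\mu$. Two cases follow. If $\mu$ is non-constant, then taking $W$ as the closure of $\mu(B)$ gives an $f$-invariant non-constant rational function on $X$. Every orbit then lies in a level set, which contradicts density since $V_f$ points avoid the indeterminacy. If $\mu$ is constant, then after a generically finite base change the fibration becomes isotrivial: birationally $Y\times_B B'\simeq F\times B'$ with $F$ of general type. Here $\Aut(F)$ is finite, so the relevant monodromy/automorphism data lies in a finite group.

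For the isotrivial case, I would argue that some iterate $f^N$ acts on the isotrivial family compatibly with the trivialization. The transition data takes values in the finite group $\Aut(F)$, so after passing to an iterate and a finite cover the induced map takes the form $(y,b')\mapsto(\phi_{b'}(y),f_{B'}(b'))$ with $\phi_{b'}\in\Aut(F)$. By the discreteness of $\Aut(F)$, $\phi_{b'}$ is constant, and a further iterate makes it the identity. The projection to $F$ (or to $F/\Aut F$, which descends to $X$) is then a non-constant $f^N$-invariant rational map, since $\dim F>0$. This contradicts density as before. I expect this descent through the finite base change to be the main obstacle, that is, making sure the invariant map $X\dasharrow F/\Aut(F)$ is well defined on $X$ itself and not merely on a cover. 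To handle it, I would first try to use the quotient by the Galois group of $B'\to B$ combined with $\Aut(F)$, which produces an honest dominant rational map $X\dasharrow F/G$ with positive-dimensional target.
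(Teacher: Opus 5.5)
\textbf{Verdict: correct, by a genuinely different route, with two steps to tighten.} The paper gives no proof of this statement. It quotes it from Bianco \cite[Proposition]{LB19} (see also \cite{Og18}) and uses it as a black box at the end of Section~\ref{sect4}.

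Your dichotomy through the moduli map $\mu$ of the relative canonical models is a sound mechanism. If $\mu$ is non-constant, then $\mu\circ\pi$ is a non-constant $f$-invariant rational map. If $\mu$ is constant, finiteness of $\Aut(F)$ produces such a map. In both cases the easy direction of Theorem~\ref{thm1} finishes the proof. So the contrapositive framing is not needed: you show unconditionally that $f$ preserves a non-constant rational function.

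Your route makes the inputs explicit:
\begin{enumerate}
\item BCHM plus invariance of plurigenera, which give over a dense open $B^0\subset B$ a flat family of canonical models on which $f$ acts by fibrewise isomorphisms over $f_B$; this is what makes $\mu$ a morphism on $B^0$.
\item A coarse moduli space of canonically polarized varieties with canonical singularities (Viehweg, Koll\'ar).
\item Finiteness of automorphism groups of canonical models of general type.
\end{enumerate}
The citation in the paper buys brevity.

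\textbf{First step to tighten: the isotrivial case.} An arbitrary trivializing cover $B'\to B$ need not admit a lift of $f_B$, or of any iterate of it. So the normal form $(y,b')\mapsto(\phi_{b'}(y),f_{B'}(b'))$ is not justified as written. On the other hand, the descent you single out as the main obstacle is in fact automatic. Use the canonical cover $I:=\mathrm{Isom}_{B^0}(F\times B^0,Y^0)\to B^0$, which after shrinking $B^0$ is a finite \'etale $\Aut(F)$-torsor. The morphism $Y^0\times_{B^0}I\to F/\Aut(F)$, $(y,\phi)\mapsto[\phi^{-1}(y)]$, is constant on $\Aut(F)$-orbits. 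Hence it descends to a dominant morphism $\theta\colon Y^0\to F/\Aut(F)$. Moreover $\theta\circ f_Y=\theta$ on the nose, because $f_Y|_{Y_b}\circ\phi\in I_{f_B(b)}$ and $(f_Y\circ\phi)^{-1}(f_Y(y))=\phi^{-1}(y)$. This needs no iterate, no lift to a cover, and no discreteness argument.

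\textbf{Second step to tighten: ``every orbit lies in a level set''.} This needs the graph closure. Points of $V_f$ avoid $I(f)$, but they may lie in the indeterminacy locus of the invariant function $g$. Pick $(x,t_0)$ in the closure $\Gamma_g\subset X\times\BP^1$ of the graph of $g$. Since $f\times{\rm id}$ maps $\Gamma_g\cap((X\setminus I(f))\times\BP^1)$ into $\Gamma_g$, the whole orbit of $x$ lies in the proper closed subset ${\rm pr}_1(\Gamma_g\cap(X\times\{t_0\}))$. Alternatively, simply invoke Theorem~\ref{thm1}.
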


In what follows, $V$ is a minimal Calabi-Yau variety satisfying the conditions (1) and (2) in Theorem \ref{mainthm1}. Following the convention above, for a rational fibration $\pi : V \dasharrow B$, we choose a resolution $\nu : \tilde{V} \to V$ of the singularities of $V$ and $I(\pi)$ so that $\tilde{\pi} = \pi \circ \nu : \tilde{V} \to B$ is a surjective morphism with connected smooth general fibers. 
\begin{lemma}\label{lem31} $V$ has no $f$-equivariant rational fibration $\pi : V \dasharrow B$ such that $0 < \dim B < \dim V$ and $\kappa(\tilde{V}_b) = 0$ for general $b \in B$. 
\end{lemma}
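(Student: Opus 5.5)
We argue by contradiction. Suppose $\pi : V \dasharrow B$ is an $f$-equivariant rational fibration with $0<\dim B<\dim V$, $\pi\circ f = f_B\circ \pi$, and $\kappa(\tilde V_b)=0$ for general $b$. The idea is to extract from $\pi$ an $f$-stable nonzero class in $N^1(V)_{\Q}$ that is nef but not big, and then contradict condition (2).

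\emph{Step 1: construct the class.} Choose a very ample divisor $H_B$ on $B$, let $D := \pi^*H_B$ be its strict transform on $V$ (the pushforward $\nu_*\tilde\pi^*H_B$), and note that $D$ is an effective movable divisor. By condition (1) there is a minimal Calabi-Yau variety $V'$ and a birational map $\mu: V'\dasharrow V$ such that $D' := \mu^*D$ is semi-ample. Replace $V$ by $V'$ and $f$ by its conjugate. This is harmless: by Remark \ref{rem1}(3) the characteristic polynomial is unchanged, and equivariance and the fiber condition are birational notions. The Iitaka fibration of $D'$ is a morphism $\psi: V'\to W$ with $D' = \psi^*A$ for some ample $A$ on $W$.

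\emph{Step 2: identify the Iitaka fibration with $\pi$.} Compare $\psi$ with $\pi$. Since $D'$ is pulled back from $B$, we have $\kappa(D') \ge \dim B$. Equality, together with $\psi$ factoring birationally through $\pi$, would follow from a Kawamata-type argument: for general $b$ the restriction $D'|_{\tilde V_b}$ is trivial modulo exceptional divisors, and these exceptional divisors have Iitaka dimension $0$ on a fiber with $\kappa = 0$. This is exactly where the hypothesis $\kappa(\tilde V_b)=0$ enters, through $K_{\tilde V}|_{\tilde V_b}\sim_{\Q}$ effective exceptional and additivity. I expect this identification to be the main obstacle, and I would follow the argument of \cite[Section 4]{LB19} closely here. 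Granting it, $\psi$ is birational to $\pi$ and hence $f$-equivariant, so $f$ descends to $f_W\in \Bir(W)$. Since the contraction $\psi$ is canonically attached to the ray (or face) spanned by $D'$, the class $f^*[D']$ is again of the form $\psi^*(\text{big class on } W)$.

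\emph{Step 3: contradict condition (2).} Consider the subspace $\psi^*N^1(W)_{\Q}\subset N^1(V')_{\Q}$. It is $f^*$-stable, nonzero, and proper: it is nonzero because $D'\neq 0$, and proper because any class pulled back from $W$ satisfies $(\psi^*x)^d=0$ when $\dim W<d$, while $N^1(V')$ contains ample classes. If $\Phi_f$ is irreducible, no proper nonzero rational invariant subspace exists, which is a contradiction. If $\Phi_f=\varphi(x)(x+a)$, the only proper invariant rational subspaces are the eigenline $\ker(f^*+a)$ and $\ker\varphi(f^*)$. In the first case $f^*[D']=-a[D']$ with $a>0$ sends a nonzero nef class to an anti-nef one, which is impossible since $f^*$ preserves the movable cone. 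In the second case $\ker\varphi(f^*)$ has codimension one and is spanned by pullbacks from $W$. Then $\psi^*N^1(W)$ has dimension $\rho(V')-1$, and the nef cone of $W$ pulls back to a codimension-one face of the movable cone containing no big class. Since $\varphi$ has degree $\ge 2$ and is irreducible, $f^*$ acts on this hyperplane without a rational eigenvector. However, $f^*$ preserves the pulled-back nef cone of $W$, which is a rational polyhedral-like cone, so one should instead invoke an eigenvector argument. To close this last case, I would use that $f^*$ also preserves the kernel line of intersection with the fiber class, and derive the contradiction from that.
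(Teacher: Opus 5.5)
You have a genuine gap in Step 3.

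\textbf{What matches the paper.} Your Steps 1 and 2 follow the paper. You pass to a model where $D=\pi^*H_B$ is semi-ample. Then you use $\kappa(\tilde V_b)=0$ (all discrepancies are positive, so $\tilde L|_{\tilde V_b}$ is dominated by a multiple of $K_{\tilde V_b}$) together with easy addition to show that the Iitaka fibration $\psi$ of $D'$ is birational to $\pi$. So $\pi$ may be taken to be an $f$-equivariant morphism.

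\textbf{The gap.} Step 3 rests entirely on $\psi^*N^1(W)_{\Q}$ being $f^*$-stable. Your only justification is the claim that $f^*[D']$ is again a pullback from $W$ because $\psi$ is ``canonically attached'' to the face of $D'$. That does not follow. $f$ is only an isomorphism in codimension one and $f_W$ is only birational, so $f^*[D']=(f_W\circ\psi)^*A$ need not be nef on $V'$; it is nef on a different model. It can also differ from every $\psi^*$-pullback by divisors of $V'$ lying over the indeterminacy or exceptional loci of $f_W$. For instance, divisors mapping onto codimension $\ge 2$ subsets of $W$ are not numerically trivial over $W$, so their classes are not in $\psi^*N^1(W)_{\Q}$. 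This is exactly the step the paper replaces, following \cite{LB19}, when correcting \cite{Og18}.

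\textbf{The missing idea.} Saturate under all birational maps of the base: $M:=\Q\langle(\varphi\circ\psi)^*N^1(W)_{\Q}\mid\varphi\in\Bir(W)\rangle$. Then $f^*M=M$ holds by construction, since $f^*(\varphi\circ\psi)^*v=(\varphi\circ f_W\circ\psi)^*v$ with $\varphi\circ f_W\in\Bir(W)$. Also $M\neq N^1(V)_{\Q}$: each $\varphi\circ\psi$ is almost holomorphic with the same general fibers as $\psi$, so every class in $M$ vanishes on a general curve $C$ in a general fiber. Your properness argument via $(\psi^*x)^d=0$ does not survive this enlargement, but the curve argument does.

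\textbf{The codimension-one case.} You leave this case unfinished, and your sketch uses a further unjustified claim, namely that $f^*$ preserves the pullback of the nef cone of $W$. With $M$ in place it closes quickly:
\begin{itemize}
\item write $N^1(V)_{\Q}=M\oplus\Q l$ with $f^*l=-al$;
\item since $M$ vanishes on $C$, you may assume $(l.C)>0$;
\item for an ample $H=m+cl$, intersecting with $C$ forces $c>0$;
\item since $f^*m\in M$ and $f^*H$ is movable, $0\le(f^*H.C)=-ac(l.C)<0$, a contradiction.
\end{itemize}
Your closing idea of pairing with a curve in the fiber is the right one. It only works on an $f^*$-stable subspace containing no ample class, which is what $M$ supplies. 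Your one-dimensional case is fine once stability is established.
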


\begin{proof} Assuming to the contrary that $V$ has an $f$-equivariant rational fibration $\pi : V \dasharrow B$ such that $0 < \dim\, B < \dim\, V$ and $\kappa(\tilde{V}_b) = 0$ for general $b \in B$, we shall derive 
a contradiction. {\it Our proof of Lemma \ref{lem31} until the end of Calim \ref{cl31} is almost identical to the relevant part of the proof of \cite[Theorem 1.2]{Og18}. However, since we argue by contradiction, to avoid any confusion, we also repeat the identical part.} 

By taking a resolution of singularities of $B$, we may and will assume that $B$ is smooth. Let $H$ be a very ample divisor on $B$. Set $L := \nu_*(\tilde{\pi}^*H)$. Then $|L|$ is movable and $\pi = \Phi_{\Lambda}$ for a sublinear system $\Lambda \subset |L|$ and $\Phi_{\Lambda}$ is the rational map associated to $\Lambda$. Then, by the assumption (1) in Theorem \ref{mainthm1}, there are a minimal Calabi-Yau variety $V'$ and a birational map $g : V' \dasharrow V$ such that $g^*L$ is semi-ample. So, by replacing $(V, f)$ by 
$(V', f')$ with $f' := g^{-1} \circ f \circ g$, we may and will assume that $L$ is semi-ample. By Remark \ref{rem1} (3), the characteristic polynomial of $(f')^*|_{N^1(V')_{\Q}}$ is the same as the characteristic polynomial of $f^*|_{N^1(V)_{\Q}}$.  
Let us take a sufficiently large positive integer $m$ such that $|mL|$ is free and the morphism $\varphi_m := \Phi_{|mL|}$ is the Iitaka-Kodaira fibration associated to $L$. Set $B' = \varphi_m (V)$. Then $B'$ is normal, projective and $\dim\, B' = \kappa(V, L)$, the Iitaka-Kodaira dimension of $L$. As $\Lambda \subset |L| \subset |mL|$, the surjective morphism $\varphi_m : X \to B'$ factors through $\pi : X \dasharrow B$, and hence, there is a dominant rational map $\rho : B' \dasharrow B$ such that $\pi = \rho \circ \varphi_m$. 

\begin{claim}\label{cl31}
$\rho : B' \dasharrow B$ is a birational map.
\end{claim}
\begin{proof}
As $\pi$ is of connected fibers and $\rho$ is dominant, it suffices to show that $\dim B = \dim B'$. Set $\tilde{L} := \nu^*L$. Since $\dim B' = \kappa(V, L) = \kappa(\tilde{V}, \tilde{L})$, it suffices to show that $\dim B = \kappa(\tilde{V}, \tilde{L})$.

Let $\{E_i\}_{i=1}^{m}$ be the set of exceptional prime divisors of $\nu$. Choose a positive integer $n$ such that $\sO_V(nK_{V}) \simeq \sO_V$. Then, in ${\rm Pic}\,(\tilde{V})$, we have
$$\tilde{L} = \nu^*L = \tilde{\pi}^*H + \sum_{i=1}^{m} a_iE_i\,\, ,\,\, nK_{\tilde{V}} = \sum_{i=1}^{m} b_iE_i\,\, ,$$
where $a_i \ge 0$ as $\nu$ resolves $I(\pi)$ and $b_i > 0$ as $V$ is a minimal Calabi-Yau variety. 
Since $\tilde{L}|_{\tilde{V}_b} = \sum_{i=1}^{m} a_iE_i|_{\tilde{V}_b}$ as $H|_{\tilde{X}_b}$ is trivial and $nK_{\tilde{V}_b} = b_i E_i|_{\tilde{V}_b}$ by the adjunction formula. Then there is a positive integer $M$ such that $M K_{\tilde{V}_b} - \tilde{L}|_{\tilde{V}_b}$ is linearly equivalent to an effective divisor. 
Hence, for general $b \in B$, we have by $\kappa(\tilde{X}_b) = 0$ that
$$0 \le \kappa(\tilde{V}_b, \tilde{L}|_{\tilde{V}_b}) \le \kappa(\tilde{V}_b) = 0\,\, .$$
Hence $\kappa(\tilde{V}_b, \tilde{L}|_{\tilde{V}_b}) = 0$ and therefore
$$\dim B  \le \kappa(\tilde{V}, \tilde{L}) \le \dim B + \kappa(\tilde{V}_b, \tilde{L}|_{\tilde{V}_b}) = \dim B\,\, .$$
Here the second inequality follows from \cite[Theorem 5.11]{Ue75}. 
Hence $\dim B = \kappa(\tilde{V}, \tilde{L})$ and the result follows.  
\end{proof}

By Claim \ref{cl31}, we may and will assume that $\pi : V \to B$ is an $f$-equivariant surjective {\it morphism} given by the free complete linear system $|L|$, by replacing $\pi : V \dasharrow B$ by $\varphi_m : V \to B'$ for some $m \ge 1 $. 

As in \cite{LB19}, we consider the following subspace of the $\Q$-vector space $N^1(V)_{\Q}$:
$$M := \Q \langle (\varphi \circ \pi)^*N^1(B)_{\Q}\, |\, \varphi \in {\rm Bir}\, (B) \rangle \subset N^1(V)_{\Q}.$$ 

\begin{claim}\label{cl32}
$0 \not= M \not= N^1(V)_{\Q}$. Moreover $f^*M = M$.
\end{claim}

\begin{proof} Since $0 \not= \pi^*N^{1}(B)_{\Q} \subset M$, we have $M \not= 0$. Let $\varphi \in {\rm Bir}\, (B)$. Then there are Zariski dense subsets $U, U' \subset B$ such that $\varphi|_{U} : U \to U'$ is an isomorphism. Since $\pi$ is a surjective morphism, it follows that $\varphi \circ \pi$ is a surjective morphism from $\pi^{-1}(U) \subset V$ onto $U'$, that is, $\varphi \circ \pi$ is an {\it almost holomorphic map} from $V$. Let $v \in N^1(B)_{\Q}$ and choose a positive integer $m$ and very ample general divisors $h_1$ and $h_2$ on $B$ such that $mv = h_1 -h_2$ in $N^1(B)$. Then 
$$(\varphi \circ \pi)^* (mv) = (\varphi \circ \pi)^* (h_1) - (\varphi \circ \pi)^* (h_2)$$
in $N^1(V)_{\Q}$ and $((\varphi \circ \pi)^* (h_i).C)_V = 0$ ($i=1$, $2$) for a general curve $C$ in a general fiber of almost holomorphic $\varphi \circ \pi$. Thus $M$ contain no ample class of $V$. Hence $M \not= N^1(V)_{\Q}$. Since $\pi \circ f = f_B \circ \pi$ and $f$ is isomorphic in codimension one and $\pi$ is a morphism, we have 
$$f^*((\varphi \circ \pi)^*v) = (\varphi \circ \pi \circ f)^*v = (\varphi \circ f_B \circ \pi)^*v.$$
Since $\varphi \circ f_B \in {\rm Bir}\, (B)$, it follows that $f^*M \subset M$. Hence $f^*M = M$ as $f^*$ is an isomorphism on $N^1(V)_{\Q}$. This completes the proof. \end{proof}

Now we are ready to derive a contradiction to complete the proof of Lemma \ref{lem31}. By the assumption (2) in Theorem \ref{mainthm1}, $M$ is either diminsion one or codimension one and the characteristic polynomial of $f^*|_{N^1(V)_{\Q}}$ is of the form $\varphi(x)(x+a)$, where $\varphi(x) \in \Q[x]$ is irreducible of degree $\ge 2$ and $a \in \Q_{>0}$. 

If $\dim M = 1$, then $M$ is generated by the class of $\pi^*h$ where $h$ is a very ample divior on $B$ and the action of $f^*$ on $M$ is the multiplication by $-a < 0$. Since the divisor $\pi^*h$ is non-zero effective, so is $f^*(\pi^*h)$. On the other hand, $f^{*}(\pi^*h) = -a \pi^*h$ in $N^1(V)_{\Q}$. Let $H$ be a very ample divisor on $V$. We have then  
$$0 < (f^{*}(\pi^*h).H^{d-1})_V = -a ((\pi^*h).H^{d-1})_V < 0,$$
 a contradiction.

Assume that $M$ is of codimension one. By assumption (2) in Theorem \ref{mainthm1}, we have an $f^*$-stable decomposition $N^1(V)_{\Q} = M \oplus \Q l$ with $f^*l = -al$. Note that any element of $M$ is represented by a sum of divisors not dominating $B$ by Claim \ref{cl32} or its proof. Thus, for any curve $C$ in a general fiber of $\pi$, we have $(m.C) = 0$ for all $m \in M$. Thus $(l.C) \not= 0$. By replacing $l$ by $-l$ if necessary, we may and will assume that $(l. C) > 0$. Let $H$ be a very ample divisor on $V$ and write $[H] = m + cl$ with $m \in M$ and $c \in \Q$. Then, by taking the intersection with $C$, we have $c > 0$. On the other hand, as $C$ is general and $f^*H$ is movable, we have also
$$0 \le (f^{*}H.C) = -ac(l.C) < 0,$$
a contradiction. 

This completes the proof of Lemma \ref{lem31}. 
\end{proof}

\begin{lemma}\label{lem32} $Z_f \not= \emptyset$ for $(V, f)$ in Theorem \ref{mainthm1}.
\end{lemma}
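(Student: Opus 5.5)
We argue by contradiction and combine Theorem \ref{thm21}, Theorem \ref{thm31} and Lemma \ref{lem31}. First, $f$ has infinite order: if $f^N = {\rm Id}_V$, then $(f^*)^N = {\rm Id}$ on $N^1(V)_{\Q}$, so every root of $\Phi_f(x)$ is a root of unity. Then $\Phi_f$ is a product of cyclotomic polynomials, and since condition (2) forces an irreducible factor of degree $\ge 2$, that factor would itself be cyclotomic. To rule this out, note that $f^*$ preserves the movable cone, which has nonempty interior. If $f^*$ had finite order, averaging an interior movable class over the finite group $\langle f^* \rangle$ would give a nonzero class fixed by $f^*$. This contradicts the shape of $\Phi_f$: an irreducible factor of degree $\ge 2$ does not have $1$ as a root, and $x+a$ with $a>0$ does not either. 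Hence ${\rm ord}\, f = \infty$.

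Now suppose $Z_f = \emptyset$. By Theorem \ref{thm1} (equivalently Theorem \ref{thm21} over uncountable $k$), there is a dominant rational map $\pi : V \dasharrow W$ with $0 < \dim W < \dim V$ and $\pi \circ f = \pi$. Taking the Stein factorization birationally, we may assume $\pi$ has connected fibers. Then $\pi$ is an $f$-equivariant rational fibration with $f_W = {\rm Id}_W$. Resolve as in the convention of Section \ref{sect4} to get $\tilde{\pi} : \tilde{V} \to W$, and let $\tilde{V}_b$ be a general fiber.

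The argument then splits according to $\kappa(\tilde{V}_b)$. We have $\kappa(\tilde{V}_b) \ge 0$: writing $nK_{\tilde{V}} = \sum b_i E_i$ with $b_i > 0$ as in the proof of Claim \ref{cl31}, adjunction gives $nK_{\tilde{V}_b}$ effective. If $\kappa(\tilde{V}_b) = 0$, Lemma \ref{lem31} gives a contradiction directly. If $\kappa(\tilde{V}_b) = \dim \tilde{V}_b$, then Theorem \ref{thm31} gives $Z_{f^\ell} = \emptyset$ for all powers. That is no contradiction yet, so in this case too we need a different $f$-equivariant fibration.

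The main obstacle is the intermediate case $0 < \kappa(\tilde{V}_b) < \dim \tilde{V}_b$, and the general type case is handled the same way. Here we use the relative Iitaka fibration of $\tilde{\pi}$, which is canonical and hence $f$-equivariant. It gives an $f$-equivariant rational fibration $V \dasharrow B$ through which $\pi$ factors, with $\dim W < \dim B$, and with general fibers of Kodaira dimension $0$, since these are the fibers of the Iitaka fibrations of the $\tilde{V}_b$. We check $\dim B < \dim V$: otherwise $\kappa(\tilde{V}_b) = \dim \tilde{V}_b$ for every such fibration, so the fibers of $\pi$ would be of general type. In that sub-case we note instead that the construction of $M$ in Claim \ref{cl32} and the eigenvalue argument in the proof of Lemma \ref{lem31} use only $f$-equivariance and assumption (1) applied to $\nu_*\tilde{\pi}^*H$. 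The only place $\kappa = 0$ enters is Claim \ref{cl31}, which makes $\pi$ a morphism. We would try to replace that step by the following: since $f_W = {\rm Id}$, the divisor class $L = \nu_*\tilde{\pi}^*H$ is $f^*$-invariant. This already contradicts condition (2), because $1$ is not an eigenvalue of $f^*$ by the same reasoning as in the first paragraph, and $L \neq 0$. In fact this observation settles every case at once: $f^*[L] = [L]$ with $[L]$ nonzero and movable contradicts $\Phi_f(1) \neq 0$. So we would present that short argument, checking carefully that $f^*L = L$ holds in $N^1(V)_{\Q}$ for a birational $f$ that is isomorphic in codimension one. The remaining issue is making this equality rigorous, which follows from $\pi \circ f = \pi$ off codimension two subsets.
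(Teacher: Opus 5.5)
Your final argument is exactly the paper's proof. Since $\pi\circ f=\pi$, $f^*$ fixes the nonzero effective class $[\pi^*H]$: the divisors agree off a codimension-two set, and $\Q$-factoriality makes this a statement in $N^1(V)_{\Q}$. So $1$ is an eigenvalue of $f^*|_{N^1(V)_{\Q}}$, contradicting condition (2). The paper instead composes with a rational function to $\mathbb{P}^1$ and uses $f^*V_b=V_b$ for a fiber, which is the same thing. Your explicit check that ${\rm ord}\, f=\infty$, which Theorem \ref{thm1} formally requires, is a welcome addition the paper leaves implicit. The entire Kodaira-dimension case analysis (Lemma \ref{lem31}, Theorem \ref{thm31}, relative Iitaka fibrations) is unnecessary and should be deleted.
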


\begin{proof} Otherwise, we have a rational dominant map $\pi : V \dasharrow \BP^1$ such that $\pi \circ f  = \pi$ by Theorem \ref{thm1}, by taking a non-constant rational function on $W$ ($\dim W > 0$) in Theorem \ref{thm1}. Here we used the uncountability assumption of the base field $k$. Then $f^*V_b = V_b$ as divisors for general $b \in \BP^1$ as $f$ is isomorphic in codimension one. Since $V$ is $\Q$-factorial, it follows that $1$ is an eigenvalue of $f^*|_{N^1(V)_{\Q}}$, a contradiction to the assumption (2) in Theorem \ref{mainthm1}.
\end{proof}

We are now ready to prove Theorem \ref{mainthm1}. 

Assuming that $V$ admits an $f$-equivariant rational dominant map $\pi : V \dasharrow B$ with $0 < \dim B < \dim V$, we shall derive a contradiction. We may and will assume that $\pi$ is a rtional fibration by taking the Stein factorization. 

Choose a smooth Zariski dense open subset $B_0 \subset B$ such that the induced morphism $\tilde{V}_0 := \tilde{\pi}^{-1}(B_0) \to B_0$ is smooth. 
Consider the relative canonical model of $\tilde{V}_0$ over $B_0$ (\cite{BCHM10}):
$$\varphi_0 : \tilde{V}_0 \dasharrow K_0 := {\rm Proj} \oplus_{m \ge 0}\tilde{\pi}_* \sO_{\tilde{V_0}}(mK_{\tilde{V_0}/B_0})\,\, .$$
For a projective compactification $K$ of $K_0$, the rational fibration $\varphi_0 : V_0 \dasharrow K_0$ induces a rational fibration 
$\varphi : \tilde{V} \dasharrow K$ with a resolution $\tilde{\tilde{V}} \to \tilde{V}$ of $I(\varphi)$. Then $\varphi$ is $f$-equivariant and $\kappa(\tilde{\tilde{V}}_k) = 0$ for general $k \in K$. Here $\tilde{\tilde{V}}_k$ is the fiber of $\tilde{\tilde{V}} \to K$ over $k \in K$. We have $0 < \dim B \le \dim K$ by the construction. On the other hand, by Lemma \ref{lem32} and Theorem \ref{thm31}, $\tilde{V}_b$ is not of general type for general $b \in B$. Thus $\dim K_b < \dim V_b$ for general $b \in B$. Hence $0 < \dim K < \dim V$. However, this contradicts to Lemma \ref{lem31}, as $\kappa(\tilde{\tilde{V}}_k) = 0$ for general $k \in K$. Thus $f$ is primitive. This completes the proof of Theorem \ref{mainthm1}.

\section{Proof of Theorem \ref{mainthm2} (1)}\label{sect5}

In this section, we prove Theorem \ref{mainthm2} (1). 

\medskip

Let $d \ge 3$ and $V$ be a $d$-dimensional Calabi-Yau manifold of Wehler type (Definition \ref{def2}). We will check that $V$ satisfies the condition (1) in Theorem \ref{mainthm1} and $V$ has a birataional automorphism $f \in {\rm Bir}\, (V)$ satisfying the condition (2) of Theorem \ref{mainthm1}. We also use Theorem \ref{thm51} below.

For the statement of Theorem \ref{thm51}, we recall:
$$N^1(V) \simeq {\rm Pic}\, V = \oplus_{k=1}^{d+1} \Z h_k \simeq \Z^{d+1}
\,\, , \,\, \overline{{\rm Amp}}\,(V) = \oplus_{k=1}^{d+1} \R_{\ge 0} h_k\,\, .$$
Here $h_k$ is the pullback of the hyperplane class of $\BP_k^1$ under the natural projection $V \to \BP_k^1$. Let $f \in {\rm Bir}\, (V)$. Then, by Remark \ref{rem1} (3), $f : V \dasharrow V$ is isomorphic in codimension one and we have a natural contravariant representation 
$$\rho^{*} : {\rm Bir}\, (V) \to {\rm GL}\, (N^1(V)) \simeq {\rm GL}\, (\Z^{d+1})\,\, ; \,\, f \mapsto f^*\,\, .$$
As $V$ is smooth, we do not need to replace $N^1(V)$ by $N^1(V)_{\Q}$. 

Let $p_k : V \to P_k$ ($1 \le k \le d+1$) be the natural projection, where $P_k$ is the product manifold $(\BP^1)^{d}$ obtained by removing the $k$-th factor of $(\BP^1)^{d+1}$. The morphism $p_k$ is of degree $2$ and the covering involution $\iota_k$ of $p_k$ is a birational automorphism of $V$ of order two. $p_k$ is never finite as $d \ge 3$. Then by our definition of Wehler type (Definition \ref{def2}), the Stein factorization $\overline{p}_k : V \to V_k$ of $p_k : V \to P_k$ is a {\it small} contraction of relative Picard number $1$. Moreover, $\iota^*h_k + h_k$ is invariant under $\iota_k$ so that $\iota^*h_k + h_k = p_k^*l_k$ for some divisor on $P_k$. In particular, $h_k$ is $\overline{p}_k$-ample and thus $\iota_k^*h_k$ is $\overline{p}_k$-anti-ample and therefore $\iota_k : V \dasharrow V$ is a flop in the sense of MMP. The following theorem is proved by \cite[Theorems 1.3, 3.3]{CO15}:

\begin{theorem} \label{thm51} Let $V$ be a Calabi-Yau manifold of Wehler type of dimension $d \ge 3$. Then:

\begin{enumerate}

\item ${\rm Bir}\, (V) = \langle \iota_1, \ldots, \iota_{d+1} \rangle$ and $({\rm Bir}\, (V), \{\iota_k\}_{k=1}^{d+1})$ forms a Coxeter group isomorphic to the universal Coxeter group ${\rm UC}(d+1)$ of rank $d+1$. 

\item The representation $\rho^{*}$ is faithful and coincides with the dual of the geometric representation of the universal Coxter group $({\rm Bir}\, (V), \{\iota_k\}_{k=1}^{d+1}) \simeq {\rm UC}(d+1)$: for each $1 \le j \le d+1$,
$$\rho^*(\iota_j)(h_j) = -h_j + \sum_{k \not= j}2h_k\,\, ,\,\, \rho^{*}(\iota_j)(h_i) = h_i\,\, (i \not= j).$$ 

\item For any effective movable divisor $D$, there are a nef divisor $H$ and $g \in {\rm Bir}\, (V)$ such that $g^*D = H$ in $N^1(V)$. In particular, $H$ is semi-ample. (See the description of the nef cone $\overline{{\rm Amp}}\,(V)$ above.) 
\end{enumerate}
\end{theorem}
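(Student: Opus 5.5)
The statement is Theorem~\ref{thm51}, and the paper attributes it to \cite[Theorems 1.3, 3.3]{CO15}. My plan follows the standard Cantat--Oguiso strategy, organised around the movable cone and the chamber structure cut out by the flops $\iota_k$.

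\textbf{Step 1: the action of $\iota_j$ on $N^1(V)$.} This gives (2) directly.

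\emph{Case $i \neq j$.} The class $h_i$ is pulled back from $P_j$, and $\iota_j$ commutes with $p_j$. Hence $\iota_j^*h_i = h_i$.

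\emph{Case $i = j$.} Write $V \subset \BP^1_j \times P_j$ as the zero locus of a form of bidegree $(2,(2,\dots,2))$ in the fibre coordinate $[x_0:x_1]$ of $\BP^1_j$:
$$a x_0^2 + b x_0x_1 + c x_1^2 = 0.$$
The two roots are the two points of a fibre of $p_j$. Their product is $c/a$, where $a, c$ are sections of $p_j^*\sO(2,\dots,2)$. Therefore $h_j + \iota_j^*h_j$ is the class $\sum_{k\ne j} 2h_k$, which gives the displayed formula.

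Comparing with Section~\ref{sect3}, this is exactly the (dual) geometric representation of ${\rm UC}(d+1)$ in the basis $h_k$, using the form $B$ with $B(e_i,e_i)=-1$ and $B(e_i,e_j)=1$.

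\textbf{Step 2: free product structure, faithfulness, and generation.} These give (1).

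\emph{Free product and faithfulness.} The relations among the $\iota_j^*$ in ${\rm GL}(N^1(V))$ are only $\iota_j^2=1$, because the geometric representation $\rho$ of ${\rm UC}(d+1)$ is injective \cite{Hum90}. So the subgroup $\langle \iota_1,\dots,\iota_{d+1}\rangle$ is ${\rm UC}(d+1)$, and $\rho^*$ restricted to it is faithful.

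\emph{Chamber structure.} Let $N = \overline{{\rm Amp}}(V) = \oplus \R_{\ge0}h_k$. Then $N$ and $\iota_k^*N$ share the facet $h_k^\perp$, where $h_k^\perp$ means the face spanned by the $h_i$ with $i\ne k$. Moreover $\iota_k$ is the flop of the small contraction $\overline{p}_k$. By Tits' theory the cones $g^*N$ for $g \in {\rm UC}(d+1)$ tile a convex cone $T$ (the Tits cone), with pairwise disjoint interiors.

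\emph{Generation.} Take $f \in \Bir(V)$. Since $V$ is a strict Calabi--Yau manifold, $f$ is an isomorphism in codimension one, so $f^*$ preserves the movable cone $\overline{{\rm Mov}}(V)$. Moreover $f^*(\text{ample})$ lies in the interior of some chamber $g^*N$. Then $(g^{-1}f)^*$ carries an ample class to an ample class, so $g^{-1}f$ is an automorphism of $V$. Since $\Aut(V) = \{{\rm Id}_V\}$, we get $f = g$.

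\textbf{Step 3: the movable cone equals the Tits cone.} This gives (3), and it is the step I expect to be the main obstacle.

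\emph{The inclusion $T \subset \overline{{\rm Mov}}(V)$.} This is clear, since each chamber $g^*N$ consists of nef classes on the birational model reached by flops.

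\emph{The reverse inclusion.} Take an effective movable divisor $D$. I would run the $D$-MMP, that is, the $(V,\epsilon D)$-log MMP. Every step is a flop of a $K$-trivial small contraction. On $V$ the only extremal rays not contracted by a nef $D$ are the rays of the $\overline p_k$, since the contractions are exactly those of the faces of the simplicial cone $N$. Each $D$-negative small contraction is therefore some $\overline p_k$, and its flop is $\iota_k$.

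After each flop we are again on $V$, with $D$ replaced by $\iota_k^*D$. In terms of Coxeter combinatorics, one step moves across a wall, decreasing the word length of the chamber containing $D$. To guarantee termination, I would use that $D$ effective forces $D$ into the closure of finitely many chambers. I would argue this via the integral hyperbolic lattice: the $\iota_k^*$-orbit of an integral class in the positive cone has coefficients that strictly decrease along the reduction, which is a Vinberg-type descent on $\Z^{d+1}$.

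\emph{Boundary classes.} Classes on the boundary of the positive cone need separate treatment: the no-divisor-contraction hypothesis excludes other walls, and integrality excludes accumulation. At termination, $g^*D$ is nef, hence a nonnegative combination of the $h_k$, hence semi-ample since each $h_k$ is base point free.
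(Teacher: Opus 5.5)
Your architecture is the right one: the formula for $\iota_j^*$, injectivity of the geometric representation for the free-product structure, the Tits-cone chamber structure, and reducing a movable divisor by the flops $\iota_k$. (The paper itself only quotes Cantat--Oguiso for this statement.) But there is a genuine gap exactly where you expected it, the termination in Step 3. This gap matters for more than (3). Your generation argument in Step 2 asserts that $f^*(\text{ample})$ lies in the interior of some chamber, and that is unjustified until you know effective movable classes lie in $\bigcup_g g^*N$.

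\textbf{Why the proposed descent fails.} Your descent is set in the positive cone of the Lorentzian form, and for $d\ge 3$ that is the wrong region. The $\rho^*$-invariant form $B^*$ on $N^1(V)$ dual to $B$ has Gram matrix $G^{-1}$ in the basis $h_k$, where $G=J-2I$ and $J$ is the all-ones matrix. Hence
$$B^*(h_k,h_k)=\frac{2-d}{2(d-1)}<0 .$$
So the nef generators $h_k$ are space-like. So are effective movable non-nef classes such as $\iota_1^*h_1=-h_1+2\sum_{k\ne 1}h_k$. These classes never enter the positive cone, so neither your descent nor your remarks on its boundary say anything about them. Also, it is the coefficient sum that decreases, not the coefficients: $a_k$ becomes $-a_k>0$.

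\textbf{The missing idea: bound by effectivity.} Put $H=\sum_k h_k$. By symmetry $c:=h_i\cdot H^{d-1}>0$ does not depend on $i$, so $D\cdot H^{d-1}=c\sum_i a_i$ for $D=\sum_i a_ih_i$. If $a_k<0$, your Step 1 gives
$$\iota_k^*D=-a_kh_k+\sum_{i\ne k}(a_i+2a_k)h_i .$$
So $\sum_i a_i$ drops by $(2d-2)|a_k|\ge 2d-2$ when $D$ is integral. Meanwhile $\iota_k^*D$ is the strict transform of $D$ under a map that is an isomorphism in codimension one. It is therefore again effective, and its degree stays $\ge 0$. Hence, for every effective movable $D$, the reduction reaches a nef class after at most $\sum_i a_i/(2d-2)$ flops. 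No separate treatment of boundary classes is needed. This also explains why your descent works inside the light cone: there $\sum_i a_i=(d-1)B^*(D,H)\ge 0$. Effectivity is what replaces that bound in general.

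\textbf{Repairing Step 2.}
\begin{enumerate}
\item Apply the reduction to $D=f^*A$ with $A$ ample. This gives $g$ such that $(f\circ g)^*A$ is nef.
\item The set $(f\circ g)^*({\rm Amp}(V))$ is open and meets the convex cone $N$, so it meets ${\rm int}\,N$.
\item Hence some rational ample $A'$ has $(f\circ g)^*A'$ ample, and so $f\circ g\in\Aut(V)=\{{\rm Id}_V\}$.
\end{enumerate}
This is what Step 2 actually needs.
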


We are ready to prove Theorem \ref{mainthm2} (1). $V$ satisfies the condition (1) in Theorem \ref{mainthm1} by Theorem \ref{thm51} (3). Since $\rho^{*}$ is the dual of the geometric representation of $\rho$ by Theorem \ref{thm51}, $V$ satisfies the condition (2) in Theorem \ref{mainthm1} by Theorem \ref{thma52}(1). Moreover, the image of $\iota_{d+1} \iota_d \cdots \iota_1$ under $\rho^*$ satisfies the condition (2) in Theorem \ref{mainthm1} by Theorem \ref{thma52}(2). Note that the characteristic polynomial remains the same under taking the transposed matrix. Hence $\iota_{d+1} \iota_d \cdots \iota_1$ is a primitive birational automorphism of $V$. This completes the proof of Theorem \ref{mainthm2} (1) with an explicit primitive $f \in \Bir (V)$.

\section{Proof of Theorem \ref{mainthm2} (2)}\label{sect6}

In this section, we prove Theorem \ref{mainthm2} (2). First, we show the following general result.

\begin{proposition} \label{prop61} Let $k$ be an algebraically closed field of characteristic zero. Let $V$ be a minimal Calabi-Yau variety of dimension $d \ge 3$ and $f \in \Bir (V)$ defined over $k$. Set $f^* := f^*|_{N^1(V)_{\Q}}$. Then:

\begin{enumerate}

\item ${\rm ord}\, f = \infty$ if and only if ${\rm ord}\, f^* = \infty$

\item Assume that $\rho(V) = 1$. Then $f \in \Aut (V)$ and ${\rm ord}\, f< \infty$.

\item Assume that $\rho(V) = 2$. Then if ${\rm ord}\, f = \infty$, then the characteristic polynomial $\Phi_f(x) \in \Q[x]$ is irreducible over $\Q$. Moreover, $d_1(f) >1$. 
\end{enumerate}
\end{proposition}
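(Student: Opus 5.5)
The plan is to prove (1) first and then derive (2) and (3) from it together with the fact that a birational automorphism of a minimal Calabi-Yau variety preserving an ample class is biregular. Throughout I use that $f$ is an isomorphism in codimension one (Remark \ref{rem1} (3)). Hence $f^*$ is a well-defined automorphism of $N^1(V)_{\Q}$, and since it preserves $N^1(V)$ modulo torsion, its characteristic polynomial lies in $\Z[x]$.

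\emph{Step 1: assertion (1).} The direction ${\rm ord}\, f<\infty \Rightarrow {\rm ord}\, f^*<\infty$ is trivial. For the converse, suppose $(f^*)^n={\rm id}$ and set $g:=f^n$. Take an ample Cartier divisor $H$. Then $g^*H\equiv H$, so $g^*H$ is ample. Since $g$ is an isomorphism in codimension one between $\Q$-factorial varieties and pulls an ample divisor back to an ample divisor, the standard argument (as in \cite{Ka08}, or Matsusaka–Mumford) shows $g\in\Aut(V)$.

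The next point is that $\Aut(V)\cap \{g \mid g^*[H]=[H]\}$ is finite. Because $H^1(V,\sO_V)=0$, numerical and algebraic equivalence agree up to torsion, so a power $g^{m}$ satisfies $(g^{m})^*\sO(H)\simeq\sO(H)$. Therefore $g^m$ lies in the subgroup of ${\rm PGL}(H^0(V,\sO(NH)))$ preserving the embedded $V$, which is an algebraic group. Its identity component acts trivially: $V$ has no nonzero vector fields, since $H^0(T_V)\simeq H^0(\Omega^{d-1}_V\otimes\omega_V^{-1})$ vanishes up to a finite cover by $H^1(\sO)=0$ and $K_V$ torsion. Hence that group is finite, so $g$ has finite order and thus so does $f$. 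I expect this to be the main obstacle, specifically justifying $H^0(T_V)=0$ for singular minimal Calabi-Yau varieties. I would pass to the index-one cover and use that the Albanese is trivial, i.e. reduce to the statement that a connected algebraic group acting faithfully on a non-uniruled variety with $q=0$ is trivial.

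\emph{Step 2: assertion (2).} If $\rho(V)=1$, then $f^*$ acts on $N^1(V)_\Q\simeq\Q$ by a rational number $\lambda$. Since $f^*$ preserves the movable/big cone and the lattice with determinant $\pm1$, we get $\lambda=1$. Thus $f^*H\equiv H$, and Step 1 gives $f\in\Aut(V)$ with finite order.

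\emph{Step 3: assertion (3).} Let $\rho=2$ and ${\rm ord}\, f=\infty$; by (1), $f^*$ has infinite order. Suppose $\Phi_f$ is reducible over $\Q$. Then it equals $(x-a)(x-b)$ with $a,b\in\Z$ and $ab=\pm1$, so $a,b\in\{\pm1\}$. In that case $f^*$ has an eigenvector $v\in N^1(V)_\Q$ with $f^*v=\pm v$. I would rule this out using the cone $\overline{\rm Mov}(V)$, which is $f^*$-stable, strictly convex, and 2-dimensional with two boundary rays.

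If $f^*$ swaps the two rays, then $(f^*)^2$ fixes both. If $f^*$ fixes both rays with eigenvalues $a,b$, positivity of the cone forces $a,b>0$, hence $a=b=1$. With $\det=\pm1$ and integrality this gives $(f^*)^2$ unipotent. Then $(f^*)^2$ is either the identity, contradicting infinite order, or a nontrivial Jordan block. A nontrivial Jordan block fixes a unique ray, contradicting that it fixes two independent boundary rays (both boundary rays are eigenvectors of $(f^*)^2$, and a Jordan block has only one eigenline).

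Finally, irreducibility of the integral quadratic $\Phi_f$ with constant term $\pm1$ and non-root-of-unity eigenvalues (infinite order excludes cyclotomic quadratics, since those would give finite order) yields real eigenvalues $\lambda, \pm\lambda^{-1}$ with $|\lambda|>1$. The Perron–Frobenius property on the invariant cone then gives $d_1(f)=\lambda>1$.
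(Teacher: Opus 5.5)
Your Steps 1 and 2 are essentially the paper's argument: regularity via Kawamata's lemma, then Fujiki--Lieberman once the identity component of the automorphism group is trivial. The paper simply quotes $H^0(V,T_V)\simeq H^1(V,\sO_V)^*=0$ from Xu.

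\emph{Step 1.} For the vanishing, do not pass to the index-one cover. Its irregularity can be positive even though $q(V)=0$, so ``the Albanese is trivial'' can fail there. Your fallback works directly on $V$ and already suffices. $\Aut(V,\sO_V(H))$ is a linear algebraic group, and a nontrivial connected linear group contains $\mathbb{G}_a$ or $\mathbb{G}_m$. The orbits of such a subgroup would make $V$ uniruled. This is impossible, since $mK_{\tilde V}$ is effective on a resolution $\tilde V$.

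\emph{Step 2.} You use the lattice to get $\lambda=\pm1$ before knowing $f$ is regular. Reorder as the paper does: $\lambda>0$ already makes $f^*H$ ample, so $f\in\Aut(V)$. Only then does $f^*$ preserve Cartier classes, which forces $\lambda=1$.

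\emph{Step 3.} This is a genuinely different route, and it hinges on $\Phi_f\in\Z[x]$ with $\Phi_f(0)=\pm1$; the reduction of rational roots to $\pm1$ depends entirely on it. Your justification, that $f^*$ preserves $N^1(V)$ modulo torsion, is not valid as stated for singular $V$. The map $f$ is only an isomorphism in codimension one, so $f^*$ of a Cartier divisor is a priori only a $\Q$-Cartier Weil divisor. This is why the paper works with $N^1(V)_\Q$ and remarks that integrality is automatic in Section 5 only because Wehler manifolds are smooth.

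The repair is easy. Let $\Lambda$ be the image of ${\rm Cl}(V)$ in $N^1(V)_\Q$. Then:
\begin{enumerate}
\item $\Lambda$ is stable under $f^*$ and $(f^{-1})^*=(f^*)^{-1}$;
\item $\Lambda$ spans $N^1(V)_\Q$;
\item $\Lambda$ is finitely generated, because ${\rm Cl}(V)$ is a quotient (via $\nu_*$) of ${\rm Pic}(\tilde V)={\rm NS}(\tilde V)$. Here $H^1(\sO_{\tilde V})=H^1(\sO_V)=0$, since terminal singularities are rational.
\end{enumerate}
With this, your argument goes through cleanly. Rational roots must be $\pm1$, and $(f^*)^2$ is diagonal in the basis given by the two boundary rays of $\overline{{\rm Mov}}(V)$, hence equals the identity, contradicting infinite order. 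Kronecker plus Perron--Frobenius then give $d_1(f)=\lambda>1$.

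The paper avoids any lattice statement. It passes to $g=f^2$, which has positive eigenvalues $a_1,a_2$ on the boundary rays. If some $a_i$ were rational, it takes an integral Weil divisor $D$ on a rational eigen-ray (or an ample one if $a_1=a_2$). It then uses that $((g^n)^*D\cdot H^{d-1})=a^n(D\cdot H^{d-1})$ is a positive integer for every $n\in\Z$, which forces $a=1$. Consequently $d_1(f)\in\{\sqrt{a_1},\sqrt{a_2}\}$ is irrational, hence $>1$. That device needs only integrality of degrees of single Weil divisors against a Cartier class. Yours needs the lattice, but once you have it the argument is shorter and shows the eigenvalues are a quadratic unit and its inverse up to sign.
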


\begin{proof} Let us show (1). It suffices to show that if ${\rm ord}\, f^* < \infty$, then ${\rm ord}, f < \infty$. Since $H^0(X, T_X) \simeq H^1(X, \sO_X)^{*} = 0$ for a minimal Calabi-Yau variety by \cite[Proposition 4.3]{Xu20}, it follows that the identity component $\Aut^0(V)$ is trivial. Thus $f$ is also of finite order by the Fujiki-Lieberman type theorem (See e.g. \cite[Theorem 1.4]{Li20} for singular projective varieties we need).

Let us next show (2). Since $f : V \dasharrow V$ is isomorphic 
in codimension one and preserves the ample cone by $\rho (V) =1$, it follows that $f \in \Aut(V)$ by \cite[Lemma 1.5]{Ka97}. Then $f^*(h) = h$ for the primitive integral ample class $h \in N^1(V)$ as $f \in \Aut(V)$. Thus $f^*|_{N^1(V)_{\Q}} = {\rm id}$ again by $\rho(V) = 1$. Hence the result follows from (1).

Let us show (3). Since $\Phi_{f}(x) \in \Q[x]$ is of degree 2, the irreducibility of $\Phi_{f}(x)$ is equivalent to the fact that $\Phi_{f}(x)$ has no rational zero. Thus, it suffices to show that $\Phi_{g}(x)$ has no rational zero for $g := f^2$. Note that ${\rm ord}\, g^* = \infty$ by (1).

Consider the boundary rays of $\overline{{\rm Mov}}\, (X)$. Since $\rho(X) = 2$ and $\overline{{\rm Mov}}\, (X)$ is strictly convex, there are exactly two such rays, say, $\R_{>0}v_1$ and $\R_{>0}v_2$, and $v_1$ and $v_2$ form $\R$-basis of $N^1(X)_{\R}$. Since $g = f^2$ and $f$ is isomorphic in codimension one, we have $g^* = (f^*)^2$ and. Then $g^*v_1 = a_1v_1$ and $g^*v_2 = a_2v_2$ for some $a_1 > 0$ and $a_2 > 0$ and $\Phi_g(x) = (x-a_1)(x - a_2)$ in $\R[x]$. 

Assume to the contray that one of $a_i$ is rational. Then $a_1, a_2 \in \Q$.

Let $H$ be a very ample Cartier divisor on $V$.

If $a_1 = a_2$, say, $a > 0$, then for any ample integral Weil divisor $D$, we have $(g^{n})^{*}D = a^{n}D$ for all $n \in \Z$, and this is again an integral Weil divisor class as $g$ and $g^{-1}$ are isomorphic in codimension one. Then 
$$0 < a^{n}(D.H^2)_V = ((g^{n})^{*}D .H^2)_V \in \Z$$
for all integers $n$. Then, by considering $n \to \pm \infty$, we see that $a= 1$. Then $g^* = {\rm id}$, a contradiction. 

Now we may assume that $a_1 \not= a_2$. Then we may and will assume that $a_1 \not= 1$. Since $\R v_1$ is the eiganspace of $g^*$ with eigenvalue $a_1$, which is rational and of multiplicity one, we may choose an integral Weil divisor $D$ so that $\R_{>0} v_1 = \R_{>0} [D]$. Since $(v_1.H^2)_V  \ge 0$ and since the class $H^2$ is in the interior of $\overline{{\rm NE}}(X)$, we have $(v_1H^2)_V \not= 0$ and thus $(v_1.H^2)_V > 0$. Thefore $(D.H^2)_V >0$ as well. Again as before, $(g^{n})^{*}D$ is an integral Weil divisor as a divisor. It then 
follows that 
$$0 < a_1^{n}(D.H^2)_V = ((g^{n})^{*}D .H^2)_V \in \Z$$
for all integers $n$. Then, by considering $n \to \pm \infty$ again, we see that $a_1 = 1$, a contradiction to our choice of $a_1$. 

Hence both $a_1$ and $a_2$ are irrational and so are $\sqrt{a_1}$ and $\sqrt{a_2}$. Thus $d_1(f) \not= 1$, as $d_1(f)$ is either $\sqrt{a_1}$ or $\sqrt{a_2}$ as $(f^n)^* = (f^n)^*$. Since $d_1(f) \ge 1$, it follows that $d_1(f) > 1$. This completes the proof of (3). \end{proof}

Now we are ready to prove Theorem \ref{mainthm2} (2). 

Let $X$ be a $3$-dimensional minimal Calabi-Yau variety with $\rho (X) =2$ and $f \in \Bir (X)$. Since $k$ is uncountable or $k = \overline{\Q}$, it follows that (ii) implies (iii) by Theorem \ref{thm1} and \cite[Theorem 1.6]{MX25} when $k = \overline{\Q}$. Note that, by Proposition \ref{prop61} (3), we have $d_1(f) > 1 = d_3(f)$ if $f$ is primitive so that ${\rm ord}\, f = \infty$. It is clear that (iii) implies (i). It remains to show that (i) implies (ii). 

Assume that ${\rm ord}\, f = \infty$. We are going to show that $f$ is primitive by checking the two conditions (1) and (2) in Theorem \ref{mainthm1}. 

By the log abundance theorem for threefold \cite{KMM94}, $X$ satisfies the condition (1) in Theorem \ref{mainthm1}, and by Proposition \ref{prop61} (3), $f$ satisfies the condition (2) in Theorem \ref{mainthm1}. This completes the proof of Theorem \ref{mainthm2} (2).

\section*{Acknowledgements} First of all, I would like to express my thanks to Professor Shigeharu Takayama for his invitation to talk at his seminar and useful discussions there. I noticed and repaired some errors in the proof of my prvious works during preparation of my talk there. I used Chat GPT plus in our study of the universal Coxeter groups in Section \ref{sect3}, for which I am responsible, and the proof of Theorem \ref{thma52} presented here is due to the author.

\end{document}